\newcommand\FF{\mathbb{F}}
\newcommand\PP{\mathbb{P}}
\newcommand\Pone{{\PP^1}}
\newcommand\Ptwo{{\PP^2}}
\newcommand\NN{\mathbb{N}}
\newcommand\ZZ{\mathbb{Z}}
\newcommand\GG{\mathbb{G}}
\newcommand\Ga{\GG_\mathrm{a}}
\newcommand\Gm{\GG_\mathrm{m}}
\newcommand{\Aone}{{\mathbf A}_1}
\newcommand{\Atwo}{{\mathbf A}_2}
\newcommand{\Athree}{{\mathbf A}_3}
\newcommand{\Afour}{{\mathbf A}_4}
\newcommand{\Afive}{{\mathbf A}_5}
\newcommand{\Aseven}{{\mathbf A}_7}
\newcommand{\Dfour}{{\mathbf D}_4}
\newcommand{\Dfive}{{\mathbf D}_5}
\newcommand{\Dsix}{{\mathbf D}_6}
\newcommand{\Deight}{{\mathbf D}_8}
\newcommand{\Esix}{{\mathbf E}_6}
\newcommand{\Eseven}{{\mathbf E}_7}
\newcommand{\Eeight}{{\mathbf E}_8}
\newcommand{\tS}{{\widetilde S}}
\DeclareMathOperator{\rk}{rk}
\DeclareMathOperator{\Pic}{Pic}
\DeclareMathOperator\Aut{Aut}
\DeclareMathOperator\Bl{Bl}
\DeclareMathOperator\PGL{PGL}
\DeclareMathOperator\GL{GL}
\DeclareMathOperator\diag{diag}
\newcommand\dpbox[2]{#2}
\newcommand\ecbox[2]{*+[F]{#2}}
\newcommand\hsbox[2]{*+[F--]{#2}}
\newtheorem{theorem}{Theorem}
\newtheorem{lemma}[theorem]{Lemma}
\newtheorem{proposition}[theorem]{Proposition}
\theoremstyle{definition}
\newtheorem{definition}[theorem]{Definition}
\begin{document}

\title[Equivariant compactifications]{Equivariant compactifications of two-dimensional algebraic groups}

 \author{Ulrich Derenthal}

 \address{Mathematisches Institut, Ludwig-Maximilians-Universit\"at M\"unchen, 
   Theresienstr. 39, 80333 M\"unchen, Germany}
 
 \email{ulrich.derenthal@mathematik.uni-muenchen.de}

\author{Daniel Loughran}

\address{Department of Mathematics, University Walk, Bristol, UK, BS8 1TW}

\email{daniel.loughran@bristol.ac.uk}

\begin{abstract}
  We classify generically transitive actions of semidirect products $\Ga
  \rtimes \Gm$ on $\Ptwo$. Motivated by the program to study the distribution
  of rational points on del Pezzo surfaces (Manin's conjecture), we determine
  all (possibly singular) del Pezzo surfaces that are equivariant
  compactifications of homogeneous spaces for semidirect products $\Ga \rtimes
  \Gm$.
\end{abstract}

\subjclass[2010]{14L30 (14J26, 11D45)}


\maketitle

\tableofcontents

\section{Introduction}  

In this note, we are concerned with the classification of algebraic surfaces
that are equivariant compactifications of two-dimensional connected linear
algebraic groups. Over an algebraically closed field $K$ of characteristic
$0$, any such group is isomorphic to the torus $\Gm^2$, the additive group
$\Ga^2$ or a semidirect product $\Ga \rtimes \Gm$.

Here, varieties admitting an action of a connected linear algebraic group $G$
with an open dense orbit are called \emph{equivariant compactifications of
  homogeneous spaces for $G$}. If the stabiliser of a point in the open dense
orbit is trivial, then we simply say that the variety is an \emph{equivariant
  compactification of} $G$.

Equivariant compactifications of tori are widely studied in toric
geometry. The classification of equivariant compactification of additive
groups $\Ga^n$ was initiated by Hassett and Tschinkel \cite{MR1731473}. Here,
we start the classification of equivariant compactifications of semidirect
products $\Ga \rtimes \Gm$. We focus on del Pezzo surfaces (possibly with
rational double points) having such a structure.

This has arithmetic motivations. Namely, the distribution of rational points
on Fano varieties over number fields is predicted by Manin's conjecture
\cite{MR1032922}, giving a precise asymptotic formula for the number of
rational points of bounded height. Using methods of harmonic analysis, it has
been proved for toric varieties \cite{MR1620682}, for equivariant
compactifications of $\Ga^n$ \cite{MR1906155} and recently for certain
equivariant compactifications of $\Ga \rtimes \Gm$ \cite{MR2858922}.

Furthermore, Manin's conjecture is studied systematically in dimension
$2$, where Fano varieties are del Pezzo surfaces, primarily
using universal torsors combined with various analytic techniques. See
\cite[Chapter~2]{MR2559866} for an overview. In the version stated in
\cite{MR1679843}, Manin's conjecture is expected to hold for any del
Pezzo surface whose singularities are rational double points (i.e. canonical);
different behaviour occurs if one allows other singularities (see 
\cite[Example~5.1.1]{MR1679843}).

Therefore, it is important to know which del Pezzo surfaces with at
most rational double points are equivariant compactifications so that
they may be covered by the results from harmonic analysis. It turns
out that this depends only on the \emph{type} of a del Pezzo surface
(which can be expressed by its degree, the types of its singularities
in the $\mathbf{ADE}$-classification and the number of its lines, where
the latter is relevant only in a few cases).

Toric del Pezzo surfaces are easily identified; see
\cite[Figure~1]{math.AG/0604194}, for example. Del Pezzo surfaces that
are equivariant compactifications of $\Ga^2$ were classified in
\cite{MR2753646}.  This leaves the classification of those del Pezzo
surfaces that are equivariant compactifications of semidirect products
$\Ga \rtimes \Gm$, which is the main theorem of this paper.

\begin{theorem} \label{thm:dp}
  A del Pezzo surface $S$, possibly singular with rational double points,
  is an equivariant compactification of some semidirect product
  $\Ga \rtimes \Gm$ if and only if it has one of the following types:
  \begin{itemize}
  \item degree $\ge 7$: all types,
  \item degree $6$: types $\Atwo+\Aone$, $\Atwo$, $2\Aone$, $\Aone$ (with three
    or four lines),
  \item degree $5$: types $\Athree$, $\Atwo+\Aone$, $\Atwo$,
  \item degree $4$: types $\Athree+2\Aone$, $\Dfour$, $\Athree+\Aone$.
  \end{itemize}
  Additionally, precisely the following types are equivariant
  compactifications of a homogeneous space for some semidirect product $\Ga \rtimes \Gm:$
  \begin{itemize}
  \item degree $5$: type $\Afour$,
  \item degree $4$: type $\Dfive$, $\Afour$,
  \item degree $3$: type $\Esix$, $\Afive+\Aone$.
  \end{itemize}
\end{theorem}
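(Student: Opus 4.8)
The plan is to reduce the problem to the $G$-equivariant geometry of $\Ptwo$, where $G := \Ga \rtimes \Gm$, and then to carry out a finite combinatorial analysis of equivariant blow-ups.

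\emph{Reduction to $\Ptwo$.} Let $S$ be a del Pezzo surface with at most rational double points carrying a generically transitive $G$-action, and let $\pi\colon\tilde S\to S$ be its minimal resolution. As $\pi$ is canonically attached to $S$, the action lifts to $\tilde S$, a smooth weak del Pezzo surface (that is, one with $-K_{\tilde S}$ nef and big) with a generically transitive $G$-action. A weak del Pezzo surface has no irreducible curve of self-intersection $\le -3$, so a $G$-equivariant minimal model program, contracting $G$-invariant $(-1)$-curves one at a time, terminates at $\Ptwo$ or at $\FF_n$ with $n\in\{0,1,2\}$. Since $\FF_1\to\Ptwo$ contracts the unique $(-1)$-curve (and hence is $G$-equivariant), while any $G$-equivariant blow-up of $\FF_0$ or $\FF_2$ at a $G$-fixed point in turn $G$-equivariantly dominates $\FF_1$, one finds that in every degree $3\le d\le 6$ the surface $\tilde S$ admits a $G$-equivariant birational morphism onto $\Ptwo$; the finitely many del Pezzo surfaces of degree $\ge 7$ are checked by hand to be equivariant compactifications of $G$. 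It thus suffices to determine, for each generically transitive $G$-action on $\Ptwo$, which del Pezzo surfaces with rational double points occur as anticanonical models of weak del Pezzo $G$-equivariant blow-ups of $\Ptwo$.

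\emph{Classification of $G$-actions on $\Ptwo$.} Up to conjugacy in $\Aut(\Ptwo) = \PGL_3$ we may assume that $\Gm$ acts through the diagonal torus, with some weights $[a_1:a_2:a_3]$, and that the image of $\Ga$ is a one-parameter unipotent subgroup $\exp(tN)$. For the torus to normalise the latter, the nonzero nilpotent $N$ must be a torus weight vector in $\mathfrak{gl}_3$; hence, after permuting coordinates, $N$ is proportional either to an elementary matrix $E_{ij}$ (the case $N^2 = 0$) or to $E_{12}+E_{23}$ (the case $N^2\neq 0$, which forces $a_1,a_2,a_3$ to lie in arithmetic progression). Running through these finitely many possibilities and keeping those for which a general point has a two-dimensional orbit gives an explicit finite list of actions; for each of these one records the open orbit $U$, the boundary $B=\Ptwo\setminus U$ (a union of lines and smooth conics), the induced action on each component of $B$, the $G$-fixed points of $\Ptwo$ and of its iterated blow-ups, and the stabiliser of a point of $U$, which is either trivial or a finite cyclic subgroup of $\Gm$. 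The weight by which $\Gm$ acts on $\Ga$ is irrelevant here: fixing the image subgroups in $\PGL_3$ and varying the weight merely rescales the homomorphism $\Gm\to\PGL_3$, which changes neither the orbits nor, among faithful actions, the stabiliser; this is why the statement may quantify over all semidirect products at once.

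\emph{Bookkeeping and conclusion.} For $3\le d\le 6$ the resolution $\tilde S$ is a blow-up of $\Ptwo$ at $9-d$ points in almost general position (infinitely near points allowed), and the $\mathbf{ADE}$-type of $S$ together with its number of lines is a purely combinatorial function of the configuration of these points. By the previous two steps, $S$ is an equivariant compactification of a homogeneous space for $G$ exactly when, for one of the listed actions, this configuration can be produced by the process: blow up a $G$-fixed point, pass to the new surface with its inherited $G$-action and boundary, and repeat — carried out so as to produce a weak del Pezzo surface $\tilde S$, equivalently so that no curve of self-intersection $\le -3$ is created. Enumerating all such processes for all actions yields the list of realisable types. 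Since blowing up along the boundary affects neither the open orbit nor its stabiliser, $S$ is moreover an equivariant compactification of $G$ itself precisely when one realisation uses an action with trivial stabiliser; the types realisable only through actions with nontrivial finite stabiliser are exactly the six extra cases in the statement ($\Afour$ in degrees $5$ and $4$, $\Dfive$ in degree $4$, and $\Esix$ and $\Afive+\Aone$ in degree $3$). The ``if'' direction then follows by exhibiting, for each listed type, one explicit chain of equivariant blow-ups, and the ``only if'' direction from the exhaustiveness of the enumeration.

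\textbf{Main obstacle.} The crux is exactly this exhaustiveness, and especially the negative statements it must support. Producing an action that realises each type on the list is routine once the classification on $\Ptwo$ is in hand. What requires care is verifying that the list of $G$-actions on $\Ptwo$ is complete and that every admissible chain of equivariant blow-ups has been considered: although the weak del Pezzo condition (no curve of self-intersection $\le -3$) sharply restricts which $G$-fixed point may be blown up at each stage, one must still rule out, in every degree $d\le 6$, all types absent from the statement, and isolate within the list the surfaces forced to retain a nontrivial stabiliser. Both are purely negative assertions, resting on the full case analysis rather than on a single clean argument.
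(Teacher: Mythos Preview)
Your plan is sound and would work, but it diverges from the paper's argument in its two central technical devices, and the difference is worth noting.

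\textbf{Where the approaches agree.} Both reduce to $\Ptwo$ via the fact that any birational morphism from a smooth $G$-surface to a normal projective surface is automatically $G$-equivariant (the paper's Lemma~\ref{lem:blow_down}); your MMP phrasing is equivalent, though your detour through $\FF_0$ and $\FF_2$ is unnecessary in degrees $\le 7$, since Demazure's theorem already presents every such weak del Pezzo as a blow-up of $\Ptwo$. Both classify the generically transitive $G_d$-actions on $\Ptwo$ (the paper's Theorem~\ref{thm:Ptwo}), and your observation that only the image subgroup in $\PGL_3$ matters for the orbit geometry is exactly why the statement quantifies over all $d$ at once.

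\textbf{Where they differ.} Your proposal is bottom-up: enumerate, for each action on $\Ptwo$, every admissible chain of blow-ups at $G$-fixed points, track the resulting negative-curve configuration, and read off the type. The paper is top-down. It first proves a clean numerical constraint (Lemma~\ref{lemma:candidates}): any weak del Pezzo carrying such an action has at most $\rk\Pic+1$ negative curves, because every negative curve is forced into the boundary, and the boundary has exactly $r+2$ components. This instantly cuts the candidate list to the types in Figure~\ref{fig:blow-ups}. Then, instead of tracking blow-ups, the paper tests each extremal candidate directly via a linear-series criterion (Lemma~\ref{lemma:equivariant}): write the anticanonical map $\Ptwo \dashrightarrow S \subset \PP^n$ explicitly and check whether its linear series is invariant under the $G_d$-action on $H^0(\Ptwo,\mathcal{O}(3))$. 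This reduces each case to a short algebraic computation and simultaneously determines the stabiliser, settling the ``homogeneous space but not group'' cases. For two negative cases in degree $3$ (types $\Dfive$ and $\Afive$), the paper even sidesteps computation entirely by citing that their automorphism groups are one-dimensional.

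\textbf{Trade-offs.} Your enumeration is conceptually transparent and self-contained, but the bookkeeping you flag as the main obstacle is genuinely substantial: up to six iterated blow-ups, across all actions, with the type to be identified at each terminal node. The paper's two devices---the negative-curve bound and the linear-series test---replace this tree search by a short list of candidates and a handful of direct checks on explicit equations, which is why its proof of Theorem~\ref{thm:dp} fits in a page once Lemmas~\ref{lem:quintic}--\ref{lem:cubic} are in hand.
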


Theorem \ref{thm:dp} is visualised diagrammatically in Figure
\ref{fig:blow-ups}.  Note that as remarked in \cite[Section~2]{MR1731473}, if
a variety can be given the structure of a toric variety, this structure is
unique up to equivalence (see Definition~\ref{def:equivalence}).  This may
however fail for other algebraic groups. For example, even $\PP^n$ has
infinitely many different structures as an equivariant compactification of
$\Ga^n$ for $n \geq 6$ \cite[Example~3.6]{MR1731473}.  We consider the
corresponding problem for each semidirect product $\Ga \rtimes \Gm$.  In the
case where $\Ga \rtimes \Gm$ is not the direct product $\Ga \times \Gm$, we
show that up to equivalence $\Ptwo$ admits precisely two different structures
as an equivariant compactification of $\Ga \rtimes \Gm$ (see Theorem
\ref{thm:Ptwo}). We also prove that it admits infinitely many different
structures as an equivariant compactification of a homogeneous space for each
$\Ga \rtimes \Gm$.

Note that a related result is proved in \cite[Section
6]{arXiv:1202.4568}. There however, only the classification of
those equivariant compactifications of homogeneous spaces (``almost
homogeneous" in their terminology) having Picard number one is
considered, while our techniques allow us to identify the equivariant
compactifications of $\Ga \rtimes \Gm$. Moreover, in Section
\ref{Section:dp} we also give results towards classifying the possible
actions which may occur for the surfaces listed in Theorem
\ref{thm:dp}, for example we show which stabilisers may arise.

The layout of this paper is as follows. In Section \ref{Section:groups} we
gather various facts on algebraic group actions and on equivariant
compactifications of homogeneous spaces. In Section \ref{Section:Ptwo} we
classify the different structures that $\Ptwo$ admits as an equivariant
compactification of a homogeneous space for each semidirect product $\Ga
\rtimes \Gm$.  Finally we finish off by considering del Pezzo surfaces and
proving Theorem \ref{thm:dp} in Section \ref{Section:dp}.  Throughout this
paper we work over an algebraically closed field $K$ of characteristic zero
and all algebraic groups will be linear.

\medskip

\noindent\textbf{Acknowledgements:} 
The first-named author was supported by grant DE 1646/2-1 of the
Deutsche Forschungsgemeinschaft and by the Center for Advanced Studies of
LMU M\"unchen. The majority of this work was
completed whilst the second-named author was working at l'Institut de
Math\'{e}matiques de Jussieu and supported by ANR PEPR. The authors
would like to thank Ivan Arzhantsev, Pierre Le Boudec and the referee
for their comments.

\section{Generalities on algebraic groups} \label{Section:groups}
\subsection{Actions of algebraic groups}
We begin by collecting various results on actions of (always linear)
algebraic groups on varieties.
\begin{definition}
  Let $G$ be a connected algebraic group and $X$ a proper normal variety. If
  $X$ admits an action of $G$ that is \emph{generically transitive} (i.e.\
  transitive on some dense open subset), we say that $X$ is an \emph{equivariant
    compactification of a homogeneous space for $G$}. If moreover the action
  is also \emph{generically free} (i.e.\ free on some dense open subset), then we say
  that $X$ is an \emph{equivariant compactification of $G$}.
\end{definition}

For motivation with this terminology, suppose that $X$ is an equivariant
compactification of a homogeneous space for $G$ and let $H$ be the stabiliser
of a general point (i.e.\ a point in the open dense orbit). Then $X$ contains
an open subset isomorphic to the homogeneous space $G/H$ and the action of $G$
on $X$ extends the natural action of $G$ on $G/H$.  If moreover $H$ is
reductive, then the quotient $G/H$ is affine (see
\cite[Theorem~1.1]{MR1304906}) and so the complement of $G/H$ in $X$ is a
divisor \cite[Corollaire~21.12.7]{MR0238860}, which we call the
\emph{boundary} of the action. As example, note that a toric variety is by
definition an equivariant compactification of an algebraic torus. As algebraic
tori are commutative however, every homogeneous space for a torus is in fact
itself a torus, in particular every equivariant compactification of a
homogeneous space for an algebraic torus is also a toric variety. To obtain
homogeneous spaces that are not themselves algebraic groups, one needs to
consider non-commutative groups; we will see many such examples in Section
\ref{Section:dp}.

We will be interested in classifying generically transitive actions up to the
following notion of equivalence.

\begin{definition}\label{def:equivalence}
  Let $G$ be an algebraic group acting on varieties $X_1$ and $X_2$. Then an
  equivalence of (left) $G$-actions is a commutative diagram
  \begin{equation*}
  \begin{split}
    \xymatrix{
      G \times X_1 \ar[d] \ar[r]^{(\alpha,j)} & G \times X_2 \ar[d] \\
      X_1 \ar[r]^j & X_2 }
  \end{split}
  \end{equation*}
  where $\alpha: G \to G$ is an automorphism and $j : X_1 \to X_2$
  is an isomorphism.
\end{definition}
Note that in order to classify generically transitive actions up to
equivalence, we need only consider \emph{left} actions. Indeed, if $G$
acts on the right on a variety $X$ via $(x,g) \mapsto xg$, then we
obtain a left action of $G$ on $X$ defined by $(g,x) \mapsto x
g^{-1}$. This left action is obviously generically transitive
(resp. generically free) if and only if the original action
is. Throughout this paper we will therefore assume that all groups act
on the left. 

Recall that given an action of an algebraic group $G$ on a variety $X$ and a
line bundle $L$ on $X$, a $G$-linearisation of $L$ is a fibrewise linear
action of $G$ on $L$ that respects the action of $G$ on $X$ (see
\cite[Chapter~1]{MR1304906} and \cite[Chapter~7]{MR2004511}).

\begin{lemma} \label{lem:lift} Let $G$ be a connected algebraic group such
  that $\Pic(G)=0$, and suppose that $G$ acts on some normal variety $X$. Then
  every line bundle on $X$ admits a $G$-linearisation.

  In particular for any $n\in \NN$, every projective representation $G \to
  \PGL_n$ admits a lift to a representation $G \to \GL_n$, i.e.\ there exists
  a homomorphism $G \to GL_n$ such that the diagram
  \begin{equation*}
    \begin{split}
      \xymatrix{   G  \ar[dr] \ar[r] & \GL_n \ar[d] \\
        & \PGL_n }
    \end{split}
  \end{equation*}
  is commutative. 
\end{lemma}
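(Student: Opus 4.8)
The plan is to deduce the first assertion from the standard exact sequence relating $G$-linearisations of a line bundle $L$ on $X$ to the Picard group of $G$. Recall (see \cite[Chapter~1]{MR1304906}) that for a connected algebraic group $G$ acting on a normal variety $X$, there is an exact sequence
\begin{equation*}
  0 \to H^0(X,\mathcal{O}_X^\times)/K^\times \to \Pic^G(X) \to \Pic(X) \xrightarrow{\ \delta\ } \Pic(G),
\end{equation*}
where $\Pic^G(X)$ is the group of isomorphism classes of $G$-linearised line bundles and the last map sends $L$ to the obstruction class $m^*L \otimes p_2^*L^{-1}$ restricted over $G \times \{x_0\}$ for a suitable base point (one uses normality of $X$, so that $X$ is geometrically reduced and irreducible, to make sense of this obstruction via the seesaw principle). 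Since $\Pic(G) = 0$ by hypothesis, the map $\Pic^G(X) \to \Pic(X)$ is surjective, i.e.\ every line bundle on $X$ admits a $G$-linearisation. First I would recall or cite this exact sequence and simply read off surjectivity; this is the whole content of the first paragraph of the statement.

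For the ``in particular'' clause, I would apply the first part with $X = \PP^{n-1}$ (on which $G$ acts via the given homomorphism $G \to \PGL_n$) and $L = \mathcal{O}_{\PP^{n-1}}(1)$. By the first part, $L$ admits a $G$-linearisation; taking global sections gives a linear action of $G$ on $H^0(\PP^{n-1}, \mathcal{O}(1)) \cong K^n$, i.e.\ a representation $\rho : G \to \GL_n$. It remains to check that the composite $G \to \GL_n \to \PGL_n$ agrees with the original projective representation. This is essentially formal: the $G$-action on $L$ covers the $G$-action on $\PP^{n-1}$ by construction of a linearisation, and the induced action on the complete linear system $|\mathcal{O}(1)|$, together with the canonical identification $\PP(H^0(\PP^{n-1},\mathcal{O}(1))^\vee) \cong \PP^{n-1}$, recovers precisely the given action of $G$ on $\PP^{n-1}$; hence $\rho$ lifts $G \to \PGL_n$.

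The only mild subtlety — and the place I would be most careful — is matching conventions: one must ensure $\Pic(\PP^{n-1}) \to \Pic(G)$ is the zero map (immediate from $\Pic(G)=0$) and that the normality hypothesis of the first part is genuinely available ($\PP^{n-1}$ is smooth, so this is free), and one must track whether the construction yields a left or a right linearisation, converting via $g \mapsto g^{-1}$ if necessary as discussed after Definition~\ref{def:equivalence}. No serious obstacle arises; the lemma is a packaging of Mumford's linearisation formalism, and the proof is short once the exact sequence is in hand.
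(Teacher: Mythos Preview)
Your proposal is correct and follows essentially the same approach as the paper: both invoke the exact sequence $\Pic^G(X) \to \Pic(X) \to \Pic(G)$ (you cite \cite{MR1304906}, the paper cites \cite[Theorem~7.2]{MR2004511}) to get surjectivity from $\Pic(G)=0$, and then linearise $\mathcal{O}_{\PP^{n-1}}(1)$ and pass to global sections to obtain the lift $G \to \GL_n$. The paper's version is terser---it does not spell out the commutativity check or the left/right convention---but the argument is the same.
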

\begin{proof}
  By \cite[Theorem~7.2]{MR2004511}, as $G$ is connected we have an exact
  sequence $$\Pic^G(X) \to \Pic (X) \to \Pic(G),$$ where $\Pic^G(X)$ denotes
  the group of isomorphism classes of $G$-linearised line bundles on $X$.  As
  $\Pic(G)=0$, the map $\Pic^G(X) \to \Pic (X)$ is surjective and hence every
  line bundle on $X$ admits a $G$-linearisation.

  To prove the second part of the lemma, note that a projective representation
  $G \to \PGL_n$ gives rise to an action of $G$ on $\PP^{n-1}$.  By the first
  part of the lemma, the line bundle $\mathcal{O}_{\PP^{n-1}}(1)$ admits a
  $G$-linearisation. Therefore we obtain an action on the $n$-dimensional
  vector space $H^0(\PP^{n-1}, \mathcal{O}_{\PP^{n-1}}(1))$, which is the
  required lift to a representation $G \to \GL_n$.
\end{proof}

The algebraic groups of primary interest in this paper (namely
$\Ga,\Gm$ and semidirect products $\Ga \rtimes \Gm$) all have trivial
Picard groups \cite[Remark~7.3]{MR2004511}. Note also that in general,
the choice of linearisation will not be unique if $G$ admits
non-trivial characters (see \cite[(7.3)]{MR2004511}).

Next, we obtain a criterion to help determine whether certain morphisms to
projective space are equivariant.

\begin{lemma}\label{lemma:equivariant}
  Let $X$ be a normal variety together with the action of an algebraic group
  $G$.  Let $L$ be a line bundle on $X$ that is generated by its global
  sections such that $W=H^0(X,L)$ is finite dimensional and which admits a
  $G$-linearisation.  Let $V \subset W$ be a base-point free linear series.
  Then if $\varphi:X \to \PP(V)$ denotes the associated morphism, the
  following are equivalent.
  \begin{enumerate}
  \item $V \subset W$ is invariant under the action of
    $G$. \label{item:linear}
  \item The composed morphism $X \to \PP(V) \subset \PP(W)$ is
    $G$-equivariant. \label{item:equi}
  \item $\PP(V) \subset \PP(W)$ is invariant under the action of
    $G$.\label{item:proj}
  \end{enumerate}
\end{lemma}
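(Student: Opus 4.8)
The plan is to prove the cyclic chain of implications $(\ref{item:linear}) \Rightarrow (\ref{item:equi}) \Rightarrow (\ref{item:proj}) \Rightarrow (\ref{item:linear})$, working throughout with the $G$-linearisation of $L$, which by hypothesis induces a linear $G$-action on the vector space $W = H^0(X,L)$. The key observation to set up first is that the natural morphism $\varphi_W : X \to \PP(W)$ associated to the complete linear system is $G$-equivariant: indeed, the linearisation gives for each $g \in G$ a compatible pair of an automorphism of $X$ and a linear automorphism of the total space of $L$, and chasing through the definition of $\varphi_W$ (which sends $x$ to the hyperplane of sections vanishing at $x$, equivalently to the line in $W^\vee$ given by evaluation at a chosen fibre coordinate) shows $\varphi_W(g \cdot x) = g \cdot \varphi_W(x)$, where $G$ acts on $\PP(W)$ via the induced projective representation. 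I would state this as the backbone of the argument; it is essentially the content of \cite[Chapter~1]{MR1304906} on how linearised base-point-free systems give equivariant maps.

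For $(\ref{item:linear}) \Rightarrow (\ref{item:equi})$: if $V \subset W$ is a $G$-invariant subspace, then the linear $G$-action on $W$ restricts to $V$, and the inclusion $\PP(V) \hookrightarrow \PP(W)$ is $G$-equivariant for the corresponding projective representations; composing with the equivariant $\varphi_W$ — noting that the morphism $X \to \PP(V)$ followed by this inclusion agrees with $\varphi_W$ precisely because $V$ is base-point free inside $W$ — gives that $X \to \PP(V) \subset \PP(W)$ is $G$-equivariant. For $(\ref{item:equi}) \Rightarrow (\ref{item:proj})$: this is immediate, since the image of a $G$-equivariant morphism is a $G$-stable subset, and $\PP(V)$ is the closure of (indeed equals) the image of the morphism $X \to \PP(V) \subset \PP(W)$ because $\varphi$ is a morphism to $\PP(V)$ and $X \to \PP(V)$ is dominant onto $\PP(V)$ — one should remark that $\PP(V)$ is a linear subspace and hence closed, so stability of the image forces stability of $\PP(V)$ itself (using that the image spans $\PP(V)$ linearly, which holds since $V$ was chosen as the span of the sections defining $\varphi$).

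For $(\ref{item:proj}) \Rightarrow (\ref{item:linear})$: a linear subspace $\PP(V) \subset \PP(W)$ that is stable under a linear group action on $W$ must arise from a $G$-subrepresentation; concretely, $\PP(V)$ being $G$-stable means that for each $g$ the linear automorphism of $W$ preserves the cone over $\PP(V)$, which is exactly the subspace $V$ (recovered as $\{0\} \cup \{w \in W \setminus 0 : [w] \in \PP(V)\}$), so $V$ is $G$-invariant. The only mild subtlety here — and the step I would flag as the one needing a line of care rather than a genuine obstacle — is the interplay between the \emph{projective} action coming from the linearisation (which is canonical) and the \emph{linear} action on $W$: since the linearisation itself provides a genuine linear lift to $\GL(W)$ (not merely a projective representation), there is no ambiguity, and "invariant under the action of $G$" can be read unambiguously as invariance under this linear action. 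I would include one sentence making this explicit so that statements (\ref{item:linear}) and (\ref{item:proj}) refer to compatible actions; with that in place the three equivalences are formal and the proof is short.
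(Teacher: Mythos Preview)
Your approach matches the paper's: the same cycle of implications, with $(\ref{item:linear})\Rightarrow(\ref{item:equi})$ reduced to the standard fact that a linearisation makes the complete-linear-system map equivariant (the paper simply cites \cite[Section~7.3]{MR2004511} here), and $(\ref{item:proj})\Rightarrow(\ref{item:linear})$ by lifting projective stability of $\PP(V)$ to linear invariance of $V$.

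The one point to correct is in $(\ref{item:equi})\Rightarrow(\ref{item:proj})$: your claim that $X \to \PP(V)$ is dominant, so that $\PP(V)$ is the closure of the image, is false in general --- take for instance a curve mapped by a linear series of dimension $\ge 3$. What is true, and what your parenthetical remark already supplies, is that the image $Y = \varphi(X)$ \emph{spans} $\PP(V)$ linearly (a nonzero $s \in V$ vanishing on all of $Y$ would be the zero section of $L$), and since $G$ acts on $\PP(W)$ through $\PGL(W)$ it carries linear spans to linear spans; hence $g\cdot \PP(V) = \mathrm{span}(g\cdot Y) = \mathrm{span}(Y) = \PP(V)$. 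Drop the dominance sentence and keep only this linear-span argument. The paper proves this step in exactly the same spirit, phrasing it as: $\PP(V)$ is the unique linear subspace of its dimension in $\PP(W)$ containing $Y$, so $g\PP(V)$, being another such subspace, must equal $\PP(V)$.
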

\begin{proof}
  The proof that (\ref{item:linear}) implies (\ref{item:equi}) can be found in
  \cite[Section~7.3]{MR2004511}. To show that (\ref{item:equi}) implies
  (\ref{item:proj}), first note that if we let $Y=\varphi(X)$, then $\PP(V)$
  is the only linear subspace of $\PP(W)$ of dimension $n=\dim V +1$ that
  contains $Y$.  Indeed, choose a basis $s_0,\ldots,s_n$ for $V$ and suppose
  that $H \subset \PP(W)$ is another such subspace. Then $H \cap \PP(V)$ is a
  linear subspace of dimension at most $n-1$ containing $Y$, which implies
  that there is a linear relation between $s_0,\ldots,s_n$, giving a
  contradiction. Therefore, as the $G$-equivariance of $\varphi$ implies that
  $g\PP(V)$ contains $Y$ for all $g \in G$, we see that $g\PP(V) = \PP(V)$,
  i.e.\ $\PP(V) \subset \PP(W)$ is invariant under the action of $G$.  This
  proves (\ref{item:proj}).

  Finally, we show that (\ref{item:proj}) implies
  (\ref{item:linear}). The fact that $\PP(V) \subset \PP(W)$ is
  invariant under the action of $G$ implies that for any line $E
  \subset V$ we have $gE \subset V$ for all $g \in G$. Applying this
  to the line spanned by each $s \in V$, we deduce that $gs \in V$ for
  all $g \in G$, which proves (\ref{item:linear}).
\end{proof}

Note that as (\ref{item:proj})  in Lemma
\ref{lemma:equivariant} is independent of the
choice of $G$-linearisation on $L$, we see that (\ref{item:linear}) is
also independent of the choice of $G$-linearisation.
We next consider how the property of being an equivariant
compactification of a homogeneous space behaves with respect to
birational morphisms.

\begin{lemma}\label{lem:blow_up}
  Let $G$ be a connected algebraic group and let $X$ be an equivariant
  compactification of a homogeneous space for $G$.  Let $\pi:\widetilde{X}\to
  X$ be the blow up of $X$ at a subvariety $V \subset X$ that is invariant
  under the action of $G$.  Then $\widetilde{X}$ is an equivariant
  compactification of a homogeneous space for $G$ in such a way that $\pi$ is
  a $G$-equivariant morphism.
\end{lemma}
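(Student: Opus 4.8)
The plan is to transport the $G$-action from $X$ to $\widetilde X$ by means of the universal property of blowing up: $\pi:\widetilde X\to X$ is terminal among morphisms $f:Y\to X$ for which the inverse image ideal sheaf $f^{-1}\mathcal I_V\cdot\mathcal O_Y$ is invertible. Since $V\neq X$ and $X$ is integral, the Rees algebra $\bigoplus_{n\ge 0}\mathcal I_V^n$ is a sheaf of domains, so $\widetilde X$ is integral; moreover $\pi$ is projective, hence $\widetilde X$ is proper as $X$ is, and $\widetilde X$ is birational to $X$. (Normality of $\widetilde X$ does not follow formally for an arbitrary centre, but it is easily verified in each case to which we apply the lemma below.) Thus, once we have equipped $\widetilde X$ with a generically transitive $G$-action making $\pi$ equivariant, $\widetilde X$ will be an equivariant compactification of a homogeneous space for $G$.

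To build the action, first note that for each $g\in G$ the automorphism $\sigma_g:X\to X$ satisfies $\sigma_g(V)=V$, so $\sigma_g\circ\pi:\widetilde X\to X$ pulls $\mathcal I_V$ back to an invertible sheaf and therefore factors uniquely through $\pi$; write $\widetilde\sigma_g:\widetilde X\to\widetilde X$ for the resulting automorphism, so that $\pi\circ\widetilde\sigma_g=\sigma_g\circ\pi$. Uniqueness in the universal property forces $\widetilde\sigma_e=\mathrm{id}$ and $\widetilde\sigma_{gh}=\widetilde\sigma_g\circ\widetilde\sigma_h$. The main point, and the only step that is not purely formal, is to check that the assignment $g\mapsto\widetilde\sigma_g$ is a morphism $\widetilde a:G\times\widetilde X\to\widetilde X$ of varieties rather than merely a map of sets. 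For this I would apply the universal property in families. Since $\mathrm{pr}_X:G\times X\to X$ is flat, the projection $G\times\widetilde X\to G\times X$ is the blow up of $G\times X$ along $G\times V$. Because $V$ is $G$-invariant one has $a^{-1}\mathcal I_V\cdot\mathcal O_{G\times X}=\mathcal I_{G\times V}$ — indeed the automorphism $(g,x)\mapsto(g,gx)$ of $G\times X$ carries $G\times V$ isomorphically onto itself — so the composite $a\circ(\mathrm{id}_G\times\pi):G\times\widetilde X\to X$ pulls $\mathcal I_V$ back to an invertible sheaf and hence factors uniquely through $\pi$ as a morphism $\widetilde a$ with $\pi\circ\widetilde a=a\circ(\mathrm{id}_G\times\pi)$. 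Restricting $\widetilde a$ to $\{g\}\times\widetilde X$ and invoking uniqueness identifies it with $\widetilde\sigma_g$, so $\widetilde a$ is the desired action map, and the displayed identity makes $\pi$ equivariant. The group action axioms follow by the same uniqueness principle: using that $G\times G\times\widetilde X$ is the blow up of $G\times G\times X$ along $G\times G\times V$, both $\widetilde a\circ(\mathrm{id}_G\times\widetilde a)$ and $\widetilde a\circ(m\times\mathrm{id}_{\widetilde X})$ have composite with $\pi$ equal to $a\circ(m\times\mathrm{id}_X)\circ(\mathrm{id}_{G\times G}\times\pi)$, so they coincide; the identity axiom is checked in the same way (both $\widetilde a\circ(e\times\mathrm{id}_{\widetilde X})$ and $\mathrm{id}_{\widetilde X}$ are factorisations of $\pi$ through $\pi$).

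Finally, for generic transitivity let $U\subseteq X$ be the open dense $G$-orbit. As $U$ is a single orbit and $V$ is a closed $G$-invariant subset, either $U\subseteq V$ — impossible since $U$ is dense and $V\neq X$ — or $U\cap V=\emptyset$. In the latter case $\pi$ restricts to an isomorphism $\pi^{-1}(U)\xrightarrow{\ \sim\ }U$, which is $G$-equivariant by construction, and $\pi^{-1}(U)$ is open and dense in $\widetilde X$; hence $G$ acts transitively on a dense open subset of $\widetilde X$. Therefore $\widetilde X$ is an equivariant compactification of a homogeneous space for $G$ and $\pi$ is $G$-equivariant, as claimed. I expect the verification that the abstract action $g\mapsto\widetilde\sigma_g$ is algebraic — which is exactly where flat base change for blow ups and the $G$-invariance of $V$ are used — to be the only real obstacle; everything else is formal.
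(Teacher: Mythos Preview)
Your proof is correct and follows the same approach as the paper: both invoke the universal property of blow-ups (the paper cites \cite[Corollary~II.7.15]{MR0463157}) to produce the morphism $G\times\widetilde X\to\widetilde X$, and both defer the routine verification that this is a genuine action to a short check (the paper points to \cite[Lemma~3]{MR2753646}, whereas you spell out the flat base-change argument and the uniqueness step in full). Your parenthetical caveat about normality of $\widetilde X$ is well taken---the paper's definition of equivariant compactification requires normality, which does not follow from blowing up an arbitrary centre, but as you observe this is harmless in the applications (Proposition~\ref{prop:desing} only blows up reduced points on surfaces).
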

\begin{proof}
  From the universal property of blow-ups
  \cite[Corollary~II.7.15]{MR0463157}, we obtain a morphism $G \times
  \widetilde{X} \to \widetilde{X}$.  It is easy to see that this gives
  the required action (see the proof of \cite[Lemma~3]{MR2753646}).
\end{proof}

\begin{lemma}\label{lem:blow_down}
  Let $G$ be a connected algebraic group and let $X$ be a smooth equivariant
  compactification of a homogeneous space for $G$. Let $\pi : X \to Y$ be a
  birational morphism to a normal projective variety $Y$. Then $Y$ is an
  equivariant compactification of a homogeneous space for $G$ in such a way
  that $\pi$ is a $G$-equivariant morphism.
\end{lemma}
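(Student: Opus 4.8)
The plan is to reduce the statement to its local nature near the exceptional locus of $\pi$ and then invoke the universal property of blowing up, much as in Lemma~\ref{lem:blow_up} but run in the opposite direction. First I would observe that since $X$ is a smooth equivariant compactification of a homogeneous space for $G$ and $Y$ is normal and projective, the birational morphism $\pi \colon X \to Y$ is a composition of blow-downs in the sense that there is a dense open $U \subseteq Y$ over which $\pi$ is an isomorphism; the open dense $G$-orbit $O \subseteq X$ meets $\pi^{-1}(U)$ (both being dense), so $\pi(O)$ is a dense constructible subset of $Y$ carrying a transitive $G$-action inherited via the isomorphism $\pi^{-1}(U) \cong U$. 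Thus $Y$ already has a candidate dense homogeneous space; what must be produced is a morphism $G \times Y \to Y$ extending it and making $\pi$ equivariant.

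To construct this action, I would consider the composite $G \times X \to X \xrightarrow{\pi} Y$, which is $G$-invariant for the trivial reason that it is a morphism, and ask it to descend along $\mathrm{id}_G \times \pi \colon G \times X \to G \times Y$. Since $Y$ is normal and $G \times X \to G \times Y$ is a projective birational morphism with $G \times X$ normal, a function-field / Zariski's main theorem argument shows that a morphism $G \times X \to Y$ factors through $G \times Y$ as soon as it is constant on the fibres of $G \times X \to G \times Y$ — equivalently, as soon as the two morphisms $G \times X \rightrightarrows Y$ obtained by $(g, x) \mapsto \pi(g \cdot x)$ and by pulling back any proposed action agree generically, which they do on the dense open where $\pi$ is an isomorphism by construction of the $G$-action on $\pi(O)$. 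Concretely: the rational map $G \times Y \dashrightarrow Y$ given on the dense open by transporting the $X$-action is defined in codimension one (its indeterminacy locus, being the image of the exceptional locus, has codimension $\geq 1$ in $X$, hence its preimage issues are controlled), and since $Y$ is normal and the target $Y$ is projective, this rational map is in fact a morphism by \cite[Corollary~II.7.15]{MR0463157} combined with the valuative criterion, or more directly by pushing the action morphism $G \times X \to Y$ through the blow-up structure of $\pi$.

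The cleanest route, and the one I would actually write, is to factor $\pi$ into a finite sequence of blow-ups $X = X_0 \to X_1 \to \cdots \to X_r = Y$ along smooth (or at worst normal) centres — possible since $X$ is smooth and $\pi$ is birational to a normal projective $Y$ — and then induct, at each stage showing the blow-down centre $V_i \subset X_{i+1}$ is $G$-invariant (because it is the non-isomorphism locus of $X_i \to X_{i+1}$, which is canonically attached to the morphism and hence preserved by any automorphism, in particular by the $G$-action once we know $X_i$ is a $G$-equivariant compactification), and then applying the already-available descent for a single blow-down. So the induction step is: given that $X_i$ is an equivariant compactification of a homogeneous space with $X_i \to X_{i+1}$ equivariant-to-be, descend the action across one blow-down. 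For one blow-down $\sigma \colon X_i \to X_{i+1}$ with centre $V_{i+1}$, the composite $G \times X_i \to X_i \xrightarrow{\sigma} X_{i+1}$ is a morphism that is $G$-equivariant for the source's action; since $G \times V_{i+1}$ is $G$-invariant of codimension $\geq 1$ and $\mathrm{id} \times \sigma$ is the blow-up along it (up to the usual identification), the universal property gives $G \times X_{i+1} \to X_{i+1}$ fitting in the square, and one checks the group-action axioms on the dense open where everything is an isomorphism and extends by density/separatedness.

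The main obstacle is the $G$-invariance of the contracted loci: a priori $\pi$ need not be realized by a $G$-equivariant sequence of blow-ups, so one cannot simply quote Lemma~\ref{lem:blow_up} in reverse. The fix is the observation that the exceptional locus of any birational morphism from a smooth variety is intrinsic — it is exactly the locus where $\pi$ fails to be a local isomorphism — hence is preserved by every automorphism of $X$ commuting with $\pi$; and the $G$-action commutes with $\pi$ precisely because $\pi$ is $\mathrm{Aut}$-equivariant onto its image in the relevant sense, or more simply because once the action descends at all it must carry the exceptional locus to itself. To avoid circularity I would instead argue directly at the level of the morphism $\mu \colon G \times X \to X \xrightarrow{\pi} Y$: the fibres of $\mathrm{id}_G \times \pi$ over which $\mu$ might fail to be constant are contained in $G \times E$ with $E$ the exceptional locus; but for fixed $g$, the map $x \mapsto \pi(g \cdot x)$ agrees with $x \mapsto \pi(x)$ after composing with the birational self-map of $Y$ induced by $g$, and since $Y$ is normal this birational self-map is a morphism (it has no indeterminacy in codimension one and $Y$ is normal and $Y$ projective), so $\mu$ descends. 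Once the set-theoretic/scheme-theoretic descent is in hand, verifying that the resulting $G \times Y \to Y$ satisfies associativity and the identity axiom is routine — it holds on a dense open and both sides are morphisms to the separated scheme $Y$ — and equivariance of $\pi$ is immediate from the construction.
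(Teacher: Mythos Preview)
Your proposal correctly identifies the crux of the matter---that one must show the exceptional locus of $\pi$ is $G$-invariant, or equivalently that for each $g\in G$ the morphism $x\mapsto \pi(g\cdot x)$ is constant on the fibres of $\pi$---but none of your attempted resolutions actually establishes this. The ``intrinsic exceptional locus'' argument is circular, as you yourself note: it presupposes that the $G$-action commutes with $\pi$, which is the very conclusion. The factorisation of $\pi$ into blow-ups does not help, since at each step you face the same problem of showing the centre is $G$-invariant (and in any case such a factorisation with smooth centres is not guaranteed beyond surfaces). Most seriously, your direct argument that ``the birational self-map of $Y$ induced by $g$ is a morphism because $Y$ is normal, projective, and the indeterminacy locus has codimension $\ge 2$'' is simply false: the standard Cremona involution on $\Ptwo$ is a birational self-map of a smooth projective variety with indeterminacy in codimension $2$, yet is not a morphism.

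What is missing is any genuine use of the \emph{connectedness} of $G$, which is the hypothesis that makes the lemma work. The clean argument is the rigidity lemma (equivalently, Blanchard's lemma): for a fixed fibre $F=\pi^{-1}(y)$, the map $G\times F\to Y$, $(g,x)\mapsto \pi(g\cdot x)$, collapses $\{e\}\times F$ to the point $y$; since $F$ is proper and connected and $G$ is connected, rigidity forces $\{g\}\times F$ to map to a point for every $g$. Thus $G\times X\to Y$ is constant on fibres of $\mathrm{id}_G\times\pi$, and since $Y$ is normal (so $\pi_*\mathcal{O}_X=\mathcal{O}_Y$) the map descends to $G\times Y\to Y$; the action axioms and equivariance of $\pi$ then follow by density as you say. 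The paper itself does not spell this out but simply refers to \cite[Proposition~1.3]{MR2858922}, where exactly this argument is carried out for equivariant compactifications of $G$; the point of the paper's proof is only that the same argument goes through verbatim for homogeneous spaces, since triviality of the generic stabiliser is nowhere used.
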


\begin{proof}
  For equivariant compactifications of $G$, see
  \cite[Proposition~1.3]{MR2858922}. The exact same proof works for
  equivariant compactifications of homogeneous spaces for $G$, as the
  fact that the stabiliser of a general point is trivial is not used
  in the proof.
\end{proof}

Combining these results we obtain the following.

\begin{proposition}\label{prop:desing}
  Let $G$ be a connected algebraic group, $S$ a singular projective normal
  surface and let $\pi:\widetilde{S}\to S$ be a minimal desingularisation.
  Then $S$ is an equivariant compactification of a homogeneous space for $G$
  if and only if $\widetilde{S}$ is, and in which case $\pi$ is a
  $G$-equivariant morphism.
\end{proposition}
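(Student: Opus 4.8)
The plan is to prove the equivalence in both directions by using the two preceding lemmas, after checking that a minimal desingularisation satisfies their hypotheses. Recall that for a normal projective surface $S$ with rational double points, a minimal desingularisation $\pi:\widetilde S\to S$ is a birational morphism from a smooth projective surface; it is characterised by the fact that it contracts only $(-2)$-curves, but for this proof the only features I need are that $\widetilde S$ is smooth and projective, that $\pi$ is birational, and that $S$ is normal.

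For the forward direction, suppose $S$ is an equivariant compactification of a homogeneous space for $G$. The subtlety is that Lemma~\ref{lem:blow_up} is stated for the blow up of $S$ along a $G$-invariant subvariety, whereas a minimal desingularisation need not be realised as such a blow up in one step. I would instead argue via the uniqueness of the minimal desingularisation: the action of $G$ on $S$ induces, for each $g\in G$, an automorphism of $S$, and by functoriality of the minimal desingularisation (any automorphism of $S$ lifts uniquely to an automorphism of $\widetilde S$ commuting with $\pi$) we obtain an abstract action of the group $G$ on $\widetilde S$ making $\pi$ equivariant. One then checks this set-theoretic action is a morphism $G\times\widetilde S\to\widetilde S$: this can be seen by applying the same functoriality to the automorphism of $S\times G$ over $G$ given by the action, or by the argument in \cite[Lemma~3]{MR2753646} as invoked in the proof of Lemma~\ref{lem:blow_up}. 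Since $\pi$ is an isomorphism over the smooth locus of $S$, which contains the open dense orbit, the induced action on $\widetilde S$ is again generically transitive; so $\widetilde S$ is an equivariant compactification of a homogeneous space for $G$.

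For the reverse direction, suppose $\widetilde S$ is an equivariant compactification of a homogeneous space for $G$. Then $\widetilde S$ is smooth and $\pi:\widetilde S\to S$ is a birational morphism to the normal projective surface $S$, so Lemma~\ref{lem:blow_down} applies directly and yields that $S$ is an equivariant compactification of a homogeneous space for $G$ with $\pi$ being $G$-equivariant. This direction is essentially immediate once the hypotheses of Lemma~\ref{lem:blow_down} are matched up.

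The main obstacle is the forward direction, specifically justifying that the desingularisation carries a genuine algebraic $G$-action and not merely an action by abstract automorphisms; the cleanest route is to lean on the uniqueness/functoriality of the minimal desingularisation (valid in characteristic zero for surfaces) so that the universal property produces the morphism $G\times\widetilde S\to\widetilde S$, mirroring the proof of Lemma~\ref{lem:blow_up}. Everything else—smoothness of $\widetilde S$, projectivity, preservation of generic transitivity because $\pi$ is an isomorphism over a neighbourhood of the generic orbit—is routine.
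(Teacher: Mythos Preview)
Your proof is correct, and the reverse direction matches the paper's exactly. The forward direction, however, takes a genuinely different route. The paper observes that since $S$ is a normal surface its singular locus is a finite set of points, and since $G$ is connected each of these points must be \emph{fixed} by the action; the minimal desingularisation is then realised as a sequence of blow-ups at these fixed points (and at the new singular points appearing at each stage, which are again finite and hence fixed), so Lemma~\ref{lem:blow_up} applies iteratively. You instead bypass this factorisation entirely and appeal to the uniqueness/functoriality of the minimal desingularisation to lift the action directly.

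Both arguments are sound. The paper's approach is more elementary in that the algebraicity of the lifted action is built into Lemma~\ref{lem:blow_up} at each step, so no separate verification is needed; it also makes explicit the crucial role of connectedness of $G$ (which you use only implicitly, via the fact that the action preserves the singular locus and hence lifts). Your approach is more conceptual and would generalise more readily to situations where the desingularisation is not a composite of point blow-ups, at the cost of having to justify that the lifted action is a morphism rather than merely a group homomorphism into $\Aut(\widetilde S)$---a point you correctly flag as the main obstacle.
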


\begin{proof}
  The proof of this lemma is essentially the same as the proof of
  \cite[Lemma~4]{MR2753646}.  The fact that $S$ is normal implies that the
  singular locus consists of a finite set of singularities. As $G$ is
  connected, each of these singularities must be fixed under the action of
  $G$. Since the map $\pi$ is given by successively blowing up these
  singularities, on applying Lemma~\ref{lem:blow_up} and
  Lemma~\ref{lem:blow_down} we deduce the result.
\end{proof}

Note that as the $G$-equivariant morphisms in Lemma~\ref{lem:blow_up},
Lemma~\ref{lem:blow_down} and Proposition~\ref{prop:desing} are birational,
they will preserve the order of the stabiliser of each point in the open dense
orbit.

\subsection{Semidirect  products $\Ga \rtimes \Gm$}
We now turn our attention to semidirect products of $\Ga$ and
$\Gm$. Note that one may write down all such groups in a fairly simple
way.  Namely, a semidirect product $\Ga \rtimes \Gm$ is given by a
homomorphism $\Gm \to \Aut(\Ga)=\Gm$. Since homomorphisms $\Gm \to
\Gm$ are given by $t \mapsto t^d$ for any integer $d$, any such
semidirect product has the form $G_d = \Ga \rtimes_{\phi_d} \Gm$ with
$\phi_d(t)(b)=t^db$. The group law on $G_d$ is given
by $$(b,t)\cdot(b',t')=(b+t^db',tt').$$ We keep this notation
throughout this paper. Note that we have obvious isomorphisms $G_d
\cong G_{-d}$ and $G_0 \cong \Ga \times \Gm$.

Later on, we will want to have some information about stabilisers of
generically transitive actions. For this it will be useful to know
which finite subgroups can occur.
\begin{lemma}\label{lem:finite_G_d}
  Any finite subgroup of $G_d$ is conjugate to one of the form
  \begin{align*}
    \mu_n \to G_d, \quad \zeta \mapsto (0,\zeta),
  \end{align*}
  for some $n \in \NN$. Such a subgroup is normal if and only if $n \mid d$,
  in which case $G_d/\mu_n\cong G_{d/n}$.
\end{lemma}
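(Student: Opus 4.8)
The plan is to first pin down the structure of an arbitrary finite subgroup $F \subseteq G_d$ using the projection $p : G_d \to \Gm$, $(b,t) \mapsto t$. Restricting $p$ to $F$ yields a homomorphism onto a finite subgroup of $\Gm$, which is necessarily $\mu_n$ for some $n \in \NN$, with kernel $F \cap \Ga$. Since $\Ga \cong (K,+)$ has no non-trivial finite subgroups in characteristic zero, this kernel is trivial, so $F \xrightarrow{\ \sim\ } \mu_n$ is cyclic; I choose a generator $g = (b,\zeta)$ with $\zeta$ a primitive $n$-th root of unity.

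Next I would record two explicit computations in $G_d$. An easy induction on the group law gives $g^k = \bigl(b(1 + \zeta^d + \cdots + \zeta^{(k-1)d}),\ \zeta^k\bigr)$, and a direct calculation (using $(c,s)^{-1} = (-s^{-d}c, s^{-1})$) gives the conjugation formula $(c,s)\,(b,t)\,(c,s)^{-1} = \bigl(c(1 - t^d) + s^d b,\ t\bigr)$. If $\zeta^d = 1$, then $g^k = (kb, \zeta^k)$, and $g^n = (0,1)$ forces $nb = 0$, hence $b = 0$ as $\operatorname{char} K = 0$; thus $F = \{(0,\zeta) : \zeta \in \mu_n\}$ already. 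If $\zeta^d \neq 1$, then conjugating $g$ by $\bigl(b(\zeta^d - 1)^{-1},\, 1\bigr)$ produces $(0,\zeta)$ by the conjugation formula, so $F$ is conjugate to $\{(0,\zeta) : \zeta \in \mu_n\}$. This proves the first assertion.

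For normality, the conjugation formula with $b = 0$ gives $(c,s)\,(0,\zeta)\,(c,s)^{-1} = (c(1 - \zeta^d),\, \zeta)$. Comparing with $\{(0,\zeta) : \zeta \in \mu_n\}$ (both parametrised by $\zeta$ via the second coordinate), the conjugate subgroup equals the original one for all $(c,s) \in G_d$ precisely when $\zeta^d = 1$ for every $\zeta \in \mu_n$, i.e. when $n \mid d$; in that case the subgroup is moreover central. Finally, writing $d = nm$, I would check that $\psi : G_d \to G_m$, $(b,t) \mapsto (b, t^n)$, is a surjective homomorphism — the only point to verify being $\psi\bigl((b,t)(b',t')\bigr) = (b + t^{nm}b',\, t^n t'^n) = \psi(b,t)\,\psi(b',t')$, which uses $d = nm$ and that $\Gm$ is abelian — with kernel exactly $\{(0,\zeta) : \zeta \in \mu_n\}$. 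Hence $G_d/\mu_n \cong G_m = G_{d/n}$.

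There is no serious obstacle; everything reduces to elementary computations with the group law. The one point requiring a little care is the observation that when $n \mid d$ the additive component of a finite-order element must vanish (otherwise $g^k = (kb, \zeta^k)$ has infinite order), which is exactly what makes the $n \mid d$ case of the first assertion work with no conjugation needed; the degenerate case $d = 0$ is subsumed in this.
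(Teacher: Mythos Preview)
Your proof is correct and complete. The paper takes a slightly different route for the first assertion: having shown (as you do) that $H$ injects into $\Gm$ and is therefore cyclic of order $n$, it observes that a generator is semisimple and then invokes the conjugacy of maximal tori in a connected solvable group (Borel, Theorem~III.10.6) to move $H$ into $T = \{(0,t):t\in\Gm\}$. Your argument replaces this appeal to structure theory with an explicit computation: you write down the conjugation formula $(c,s)(b,t)(c,s)^{-1} = (c(1-t^d)+s^d b,\,t)$, split into the cases $\zeta^d=1$ (where the order constraint forces $b=0$ directly) and $\zeta^d\neq 1$ (where solving $c(1-\zeta^d)+b=0$ for $c$ gives the conjugating element by hand). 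This is more elementary and entirely self-contained, at the cost of a small case analysis; the paper's version is shorter but imports a nontrivial theorem. For the normality statement and the quotient isomorphism, the two proofs are essentially identical: both exhibit the surjection $(b,t)\mapsto(b,t^n)$ from $G_d$ to $G_{d/n}$ with kernel $\mu_n$.
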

\begin{proof}
  Let $H \subset G_d$ be a finite subgroup. Restricting the exact sequence
  $$0 \to \Ga \to G_d \to \Gm \to 1,$$
  to $H$, we see that $H$ injects into $\Gm$. Indeed as $K$ has characteristic
   zero $\Ga$ has no non-trivial finite subgroups and hence $H \cap \Ga=0$.
   Therefore, there exists $n \in \NN$ such that $H \cong \mu_n$ as an
   algebraic group, in particular $H$ is cyclic and generated by a
   semisimple element.  Such an element is conjugate to one in the
   maximal torus $T= \{ (0,t): t \in \Gm\}$ by
  \cite[Theorem~III.10.6]{MR1102012}. This completes the proof of the
  first part of the lemma.

  A simple calculation shows that $\mu_n$ is not normal if $n \nmid
  d$. If $n \mid d$, then the map
  $$ G_d \to G_{d/n}, \qquad (b,t) \mapsto (b,t^n),$$
  has kernel $\mu_n$ and gives the required isomorphism.
\end{proof}

Note that it follows from Lemma \ref{lem:finite_G_d} that if we wish to
classify generically transitive actions of $G_d$ on a certain surface $S$ for
every $d \in \ZZ$, we may reduce to the case where the action is
\emph{faithful}.  Indeed, as $G_d$ and $S$ have the same dimension the
stabiliser of a general point is finite and hence the kernel of the action
will be a finite normal subgroup.  Quotienting out we obtain a faithful
generically transitive action of $G_{d/n}$ on $S$ for some $n \mid d$.

\section{Actions on the projective plane} \label{Section:Ptwo}
We now classify the generically transitive actions of $G_d$ on $\Ptwo$. 
We begin with a lemma on three-dimensional representations of $\Ga$. 

\begin{lemma}\label{lem:Ga}
  Let $f:\Ga \to \GL_3$ be a faithful representation whose image consists only
  of upper triangular matrices. Then there exist $\alpha_1,\alpha_2,\alpha_3
  \in K$ not all zero such that
  \begin{equation*}
  f(b)=
    \begin{pmatrix}
      1 & \alpha_1b & \alpha_2b + \frac{\alpha_1\alpha_3}{2}b^2\\
      0 & 1 & \alpha_3b \\
      0 & 0 & 1
    \end{pmatrix}.
  \end{equation*}
\end{lemma}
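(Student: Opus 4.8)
The plan is to study the Lie algebra representation $df: \mathrm{Lie}(\Ga) = K \to \mathfrak{gl}_3$ associated to $f$, exploit that its image consists of strictly upper triangular (nilpotent) matrices, and then recover $f$ by exponentiation. First I would note that since $\Ga$ is unipotent and $f$ lands in the group of upper triangular matrices, after differentiating we get a nilpotent matrix $N = df(1)$, and because $\Ga \cong \Ga$ is one-dimensional the map $b \mapsto f(b)$ is determined by $f(b) = \exp(bN)$ (the exponential map is an isomorphism of varieties from the nilpotent upper triangular matrices to the unipotent upper triangular group in characteristic zero, and $b\mapsto \exp(bN)$ is the unique one-parameter subgroup with the prescribed derivative). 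Writing
\begin{equation*}
  N = \begin{pmatrix} 0 & \alpha_1 & \alpha_2 \\ 0 & 0 & \alpha_3 \\ 0 & 0 & 0 \end{pmatrix},
\end{equation*}
faithfulness of $f$ forces $N \neq 0$, i.e.\ $\alpha_1,\alpha_2,\alpha_3$ not all zero. Then $N^2$ has $\alpha_1\alpha_3$ in the top-right entry and zero elsewhere, and $N^3 = 0$, so $\exp(bN) = I + bN + \tfrac{1}{2}b^2 N^2$ gives exactly the claimed matrix.

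Alternatively, and perhaps more in keeping with an elementary treatment, I would argue directly with the group law. Since $f$ is a homomorphism from the commutative group $\Ga$, the entries of $f(b)$ are polynomials in $b$ (a morphism of affine varieties $\Ga \to \GL_3$), and $f(0) = I$ forces the diagonal entries to be $1$ (a polynomial homomorphism $\Ga \to \Gm$ on the diagonal must be trivial). Writing the strictly-upper entries as $p(b), q(b), r(b)$ with $p(0)=q(0)=r(0)=0$, the identity $f(b)f(b') = f(b+b')$ yields $p(b)+p(b') = p(b+b')$ and $r(b)+r(b') = r(b+b')$, hence $p(b) = \alpha_1 b$ and $r(b) = \alpha_3 b$ are linear; and $q(b) + q(b') + p(b)r(b') = q(b+b')$, which after substituting the linear forms shows $q(b) - \tfrac{\alpha_1\alpha_3}{2}b^2$ is additive, hence linear, say $q(b) = \alpha_2 b + \tfrac{\alpha_1\alpha_3}{2}b^2$. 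Faithfulness again rules out $\alpha_1 = \alpha_2 = \alpha_3 = 0$, since otherwise $f$ is trivial.

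The step requiring the most care is justifying that the matrix entries are genuinely \emph{polynomial} in $b$ and that diagonal entries are forced to be $1$: this uses that $f$ is a morphism of algebraic groups, that $\mathcal{O}(\Ga) = K[b]$, and that $\mathrm{Hom}(\Ga,\Gm) = 0$ in characteristic zero (any such homomorphism is a polynomial $u(b)$ with $u(b)u(b') = u(b+b')$ and $u(0)=1$, forcing $u \equiv 1$). Everything else is the routine functional-equation manipulation sketched above, together with the observation that the additive polynomials on $\Ga$ over a field of characteristic zero are precisely the linear ones. I expect no serious obstacle; the only subtlety is bookkeeping the cross term $p(b)r(b')$ correctly to land on the coefficient $\tfrac{1}{2}$ rather than, say, being tripped up by the asymmetry between $b$ and $b'$.
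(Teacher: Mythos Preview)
Your first approach---differentiate to obtain a nilpotent $N$ with entries $\alpha_1,\alpha_2,\alpha_3$, then exponentiate---is exactly the paper's proof; the paper additionally spells out that the diagonal entries are trivial characters $\Ga\to\Gm$ before passing to the Lie algebra, which you fold into the remark that the image is unipotent. Your second, functional-equation argument is a correct and more elementary alternative that the paper does not give: it trades the appeal to the exponential isomorphism for direct manipulation of the cocycle identity $q(b+b')-q(b)-q(b')=\alpha_1\alpha_3 bb'$, and has the mild advantage of making the appearance of the $\tfrac{1}{2}$ completely transparent without invoking any Lie theory.
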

\begin{proof}
  By assumption, we may assume that 
    \begin{equation*}
      f(b)=
    \begin{pmatrix}
      f_{1,1}(b) & f_{1,2}(b) & f_{1,3}(b) \\
      0 & f_{2,2}(b) & f_{2,3}(b) \\
      0 & 0 & f_{3,3}(b)
    \end{pmatrix},
  \end{equation*}
  where all the $f_{i,j}(b)$ are polynomial expressions in $b$.
  For this to define an action we must have 
  \begin{align}
     f_{i,i}(b)\cdot f_{i,i}(b') = f_{i,i}(b+b'),
  \end{align}
  for $i=1,2,3$, i.e.\ each $f_{i,i}$ defines a homomorphism
  $f_{i,i}:\Ga \to \Gm$. Such a homomorphism must be trivial, hence we
  have $f_{i,i}(b)=1$ for each $b \in \Ga$ and $i=1,2,3$.

  Next, differentiating the map $f$ gives an injection of Lie algebras
  $\mathrm{d}f:\mathfrak{g} \to \mathfrak{gl}_3$, where $\mathfrak{g}$
  denotes the Lie algebra of $\Ga$. The morphism $\mathrm{d}f$ sends a
  generator of $\mathfrak{g}$ to a nilpotent matrix
  \begin{equation*}
    \begin{pmatrix}
      0 & \alpha_1 & \alpha_2 \\
      0 & 0 & \alpha_3 \\
      0 & 0 & 0
    \end{pmatrix},
  \end{equation*}
  where $\alpha_1,\alpha_2,\alpha_3 \in K$, at least one of which is non-zero.
  On exponentiating this map we obtain the result.
\end{proof}

The following lemma is the key step in the classification of the
generically transitive actions on $\PP^2$ up to equivalence. It will
also be used later on in our study of such actions on generalised del
Pezzo surfaces.

\begin{lemma}\label{lem:Ptwo}
  Let $d \in \ZZ$ and let $\rho:G_d \to \PGL_3$ be a faithful
  representation whose image consists of only upper triangular
  matrices. Then there exists an element $g \in G_d$ and $k_1,k_2 \in
  \ZZ$ not both zero and $\alpha_1,\alpha_2,\alpha_3 \in K$ not all
  zero such that
  \begin{equation*}
    \rho(g^{-1}(b,t)g) =
    \begin{pmatrix}
      t^{k_1} & \alpha_1 bt^{k_2} & \alpha_2 b+\frac{\alpha_1\alpha_3}{2}
      b^2\\
      0 & t^{k_2} & \alpha_3 b\\
      0 & 0 & 1
    \end{pmatrix}.
  \end{equation*}
  Moreover, the following four conditions must hold:
    \begin{itemize}
  \item $\alpha_1=0$ or $k_1=k_2+d$,
  \item $\alpha_2=0$ or $k_1=d$,
  \item $\alpha_3=0$ or $k_2=d$,
  \item $\alpha_1\alpha_2\alpha_3=0$.
  \end{itemize}
\end{lemma}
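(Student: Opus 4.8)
The plan is to bootstrap from Lemma~\ref{lem:Ga} applied to the restriction $\rho|_{\Ga}$. First I would lift $\rho$ to a representation $\tilde\rho : G_d \to \GL_3$ using Lemma~\ref{lem:lift} (since $\Pic(G_d) = 0$), and note that after conjugating we may assume the image of $\tilde\rho$ lies in the upper triangular Borel. Restricting $\tilde\rho$ to $\Ga$ gives a representation whose image is upper triangular; it need not be faithful a priori, but the three diagonal entries are still trivial homomorphisms $\Ga \to \Gm$, so the argument of Lemma~\ref{lem:Ga} (differentiating and exponentiating) shows $\tilde\rho(b,1)$ has exactly the shape displayed in Lemma~\ref{lem:Ga} for some $\alpha_1,\alpha_2,\alpha_3 \in K$ — not all zero, because $\rho$ is faithful so $\rho|_{\Ga}$ is nontrivial, hence $\tilde\rho(b,1)$ is a nontrivial unipotent.

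Next I would analyze $\tilde\rho(0,t)$. It is an upper triangular matrix defining a homomorphism $\Gm \to \GL_3$, so its diagonal entries are characters $t \mapsto t^{k_i}$ for integers $k_0,k_1,k_2$; after multiplying $\tilde\rho$ by a suitable character of $G_d$ (which changes the lift but not the projective representation $\rho$, by the remark after Lemma~\ref{lem:lift}) I may normalize $k_2 = 0$ in the bottom-right entry. Moreover, a diagonalizable element of a Borel is conjugate to a diagonal one by an element of the Borel, so after a further conjugation by an upper triangular matrix — which preserves the shape of $\tilde\rho(b,1)$ up to rescaling the $\alpha_i$ — I may assume $\tilde\rho(0,t) = \diag(t^{k_1}, t^{k_2}, 1)$. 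This conjugation is absorbed into the element $g \in G_d$ in the statement (strictly, a conjugation by a torus element, but one checks the Borel element used can be taken in the image of $\tilde\rho$, or one simply enlarges the claim to conjugation by an element of $\PGL_3$ preserving the Borel and then reabsorbs the diagonal part via a character twist and the remaining unipotent part via $g \in G_d$ acting by conjugation on $\Ga$).

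Then I impose the semidirect product relation $(0,t)(b,1)(0,t)^{-1} = (t^d b, 1)$ in $G_d$: applying $\tilde\rho$ and conjugating the explicit matrix $\tilde\rho(b,1)$ by $\diag(t^{k_1}, t^{k_2}, 1)$ gives
\begin{equation*}
\begin{pmatrix}
1 & \alpha_1 b\, t^{k_1 - k_2} & \alpha_2 b\, t^{k_1} + \tfrac{\alpha_1\alpha_3}{2} b^2 t^{k_1} \\
0 & 1 & \alpha_3 b\, t^{k_2} \\
0 & 0 & 1
\end{pmatrix},
\end{equation*}
and this must equal $\tilde\rho(t^d b, 1)$, which is the matrix of Lemma~\ref{lem:Ga} with $b$ replaced by $t^d b$. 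Comparing the $(1,2)$, $(2,3)$, and $(1,3)$ entries termwise yields: $\alpha_1 = 0$ or $k_1 - k_2 = d$; $\alpha_3 = 0$ or $k_2 = d$; and for the $(1,3)$ entry, the linear term gives $\alpha_2 = 0$ or $k_1 = d$, while the quadratic term gives $\tfrac{\alpha_1\alpha_3}{2} t^{k_1} = \tfrac{\alpha_1\alpha_3}{2} t^{2d}$, so $\alpha_1\alpha_3 = 0$ or $k_1 = 2d$. Finally, $\alpha_1\alpha_2\alpha_3 = 0$ follows: if all three were nonzero then $k_1 = k_2 + d$, $k_2 = d$, $k_1 = d$ force $d = 0$ and $k_1 = k_2 = 0$, contradicting that $k_1, k_2$ are not both zero. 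The $(k_1,k_2)$-not-both-zero condition itself comes from faithfulness of $\rho$: if $k_1 = k_2 = 0$ then $\tilde\rho(0,t)$ is the identity, so $\rho$ would kill the torus, contradicting injectivity of $\rho$ on $G_d$ (noting $G_d$ has no nontrivial normal subgroup contained in $\Gm$ unless forced, and in any case a faithful $\rho$ cannot be trivial on $\Gm$). Renaming $k_2$ as needed and reconciling with the display (which uses exponents $k_1, k_2, 0$ and the entry $\alpha_1 b t^{k_2}$ — so in fact one normalizes differently, putting $t^{k_2}$ rather than $t^{k_1-k_2}$ in the $(1,2)$ slot by choosing the diagonal as $\diag(t^{k_1}, t^{k_2}, 1)$ and reading off that the relation forces $k_1 - k_2 = d$ exactly when $\alpha_1 \neq 0$, consistent with the stated first bullet) completes the proof.

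The main obstacle I expect is bookkeeping the conjugations and character twists carefully enough to land precisely on the normalized form in the statement rather than a form that merely agrees up to a Borel conjugation: one must check that the extra freedom (scaling the three coordinate lines, i.e. conjugating by a diagonal matrix, together with twisting the lift by a character of $G_d$) is exactly enough to fix the bottom-right entry to $1$ and to rescale the $\alpha_i$ without introducing new off-diagonal interactions — and that the residual conjugation, if any, is realized by an element $g \in G_d$ so that the statement can be phrased intrinsically as conjugating $(b,t)$ inside $G_d$. The matrix computations themselves (exponentiating the nilpotent, conjugating by the diagonal torus, comparing entries) are entirely routine.
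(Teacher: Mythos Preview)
Your skeleton matches the paper's proof: lift to $\GL_3$, identify the diagonal characters $t^{k_i}$, normalise the last one to $1$, argue $(k_1,k_2)\neq(0,0)$ by faithfulness, reduce to the case where the torus image is diagonal, apply Lemma~\ref{lem:Ga} to the unipotent part, and read off the four conditions from the homomorphism relation. The entry-by-entry comparison you do via the conjugation relation $(0,t)(b,1)(0,t)^{-1}=(t^db,1)$ is correct and equivalent to the paper's check that the displayed matrix defines a homomorphism.

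The genuine gap is exactly the step you flag as ``the main obstacle''. You want to diagonalise $\tilde\rho(0,t)$ by conjugating by an upper-triangular $u\in\GL_3$, and then assert that $u$ can be taken in the image $\tilde\rho(G_d)$ so that the conjugation is realised by some $g\in G_d$. Your justification (``one checks the Borel element used can be taken in the image of $\tilde\rho$'') is not a proof, and your fallback (enlarge to a $\PGL_3$-conjugation and reabsorb via character twists and $\Ga$-conjugation) does not obviously land on an element of $G_d$ either: a diagonal conjugation in $\PGL_3$ is not in general induced by conjugation inside $G_d$, and a character twist of the lift does not move you between different Borel conjugates of the torus image.

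The paper closes this gap cleanly by reversing the logic. Rather than diagonalising $f(T)$ externally and then trying to drag the conjugator back into $f(G_d)$, it observes that $H=D_3\cap f(G_d)$ already has positive dimension: since some $k_i\neq 0$, the composite $\Gm\xrightarrow{\ t\mapsto(0,t)\ }G_d\xrightarrow{f}\GL_3\to D_3$ (projection to the diagonal) is nontrivial, forcing $H$ to be infinite. Hence $H^0$ is a one-dimensional torus inside $f(G_d)$, so its preimage in $G_d$ contains a maximal torus $T'$ with $f(T')\subset D_3$. Now conjugacy of maximal tori \emph{inside $G_d$} gives $g\in G_d$ with $g^{-1}Tg=T'$, and this is precisely the $g$ in the statement. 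Once you have this, the rest of your argument goes through verbatim; the multiplication $f(g^{-1}(b,1)g)\cdot f(g^{-1}(0,t)g)$ produces the displayed matrix (this is also why the $(1,2)$ entry carries $t^{k_2}$ rather than $t^{k_1-k_2}$: it comes from right-multiplication by $\diag(t^{k_1},t^{k_2},1)$, not from conjugation).
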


\begin{proof}
  Let $U=\{(b,1): b \in \Ga\}$ denote the normal subgroup of $G_d$
  isomorphic to $\Ga$, and let $T$ denote the maximal torus $T= \{
  (0,t): t \in \Gm\}$. The first step of the proof is to analyse the
  behaviour of $\rho$ when restricted to $U$ and $T$. Note that by
  Lemma~\ref{lem:lift}, there exists a lift of $\rho$ to a faithful
  representation $f:G_d \to \GL_3$ that will take the form
  \begin{equation*}
    \begin{pmatrix}
      f_{1,1}(b,t) & f_{1,2}(b,t) & f_{1,3}(b,t) \\
      0 & f_{2,2}(b,t) & f_{2,3}(b,t) \\
      0 & 0 & f_{3,3}(b,t)
    \end{pmatrix},
  \end{equation*}
  where all the $f_{i,j}(b,t)$ are polynomial expressions in $b,t,t^{-1}$.
  For this to define an action, the following relations must hold
  \begin{align} \label{eqn:diagonal}
     f_{i,i}(b,t)\cdot f_{i,i}(b',t') = f_{i,i}(b+t^db',tt'),
  \end{align}
  for $i=1,2,3$. Applying Lemma~\ref{lem:Ga} we see that
  $f_{i,i}(b,0)=1$. Therefore it follows from (\ref{eqn:diagonal})
  that each $f_{i,i}$ defines a homomorphism $f_{i,i}:T \to \Gm$, so
  we must have $f_{i,i}(b,t)=f_{i,i}(0,t)=t^{k_i}$ for some $k_i \in
  \ZZ$ and $i=1,2,3$. Note that we may obviously choose the lift $f$
  so that $k_3=0$. Moreover, we claim that at least one of $k_1$ and
  $k_2$ is non-zero.  Indeed, otherwise $f$ restricted to $T$ would
  give a map $T\to \GL_3$ whose image is unipotent.  As $T\cong \Gm$,
  such a map must be trivial, which contradicts the fact that $f$ is
  faithful.

  Next we find a maximal torus of $G_d$ that has diagonal image under $f$. Let
  $D_3 \subset \GL_3$ denote the subgroup of diagonal matrices and let $H=D_3
  \cap f(G_d)$, which is a closed algebraic subgroup of both $D_3$ and
  $f(G_d)$. Since one of the $k_i$ is non-zero, we see that $H$ is not finite.
  Thus if we let $H^0$ denote the connected component of the identity of $H$,
  it follows that $H^0$ is an algebraic torus of dimension one as it is a
  connected one-dimensional algebraic subgroup of $D_3 \cong \Gm^3$. So $H^0$
  defines a maximal torus in $f(G_d)$, and pulling back via $f$ we obtain a
  maximal torus in $G_d$ with diagonal image. However as any two maximal tori
  are conjugate (see e.g. \cite[Theorem~III.10.6]{MR1102012}), there exists an
  element $g \in G_d$ such that $f(g^{-1}Tg)$ consists of diagonal
  matrices. Moreover by the above we may assume that
  $f(g^{-1}(0,t)g)=\diag(t^{k_1},t^{k_2},1)$.

  Next note that the map $b \mapsto f(g^{-1}(b,1)g)$ is a faithful
  representation of $\Ga$ that consists of upper triangular matrices. Hence
  applying Lemma~\ref{lem:Ga} and using the fact that
  $f(g^{-1}(b,t)g)=f(g^{-1}(b,1)g)f(g^{-1}(0,t)g)$, we see that there exist
  $\alpha_1,\alpha_2,\alpha_3 \in K$ not all zero such that $f(g^{-1}(b,t)g)$
  is given by
  \begin{equation*}
    \begin{pmatrix}
      t^{k_1} & \alpha_1bt^{k_2} & \alpha_2b + \frac{\alpha_1\alpha_3}{2}b^2\\
      0 & t^{k_2} & \alpha_3b \\
      0 & 0 & 1
    \end{pmatrix}.
  \end{equation*}
  One can check that this defines a homomorphism if and only if
  \begin{equation} \label{eq:alpha_i}
     \alpha_1(t^{k_1}-t^{d+k_2})=\alpha_2(t^{k_1}-t^{d})=\alpha_3(t^{k_2}-t^{d})=0,
  \end{equation} 
  for all $t \in K^*$. This gives the list of conditions in the
  lemma. To finish the proof, it suffices to note that if
  $\alpha_1\alpha_2\alpha_3\neq0$ then~(\ref{eq:alpha_i}) implies that
  $k_1=k_2=d=0$, which does not give a faithful representation.
\end{proof}

We are now ready to classify the faithful generically transitive actions of
$G_d$ on $\Ptwo$. We first define the actions that we will be interested in.
Let $d \in \ZZ$ and let $k \in \ZZ \setminus 0$. We define a generically
transitive action of $G_d$ on $\Ptwo$ by
\begin{equation*}
  \tau_{d,k}(b,t) =
    \begin{pmatrix}
      t^{k} & 0 & 0 \\
      0 & t^{d} & b \\
      0 & 0 & 1
    \end{pmatrix}.
\end{equation*}
The following facts are easy to check. We use coordinates $(x:y:z)$ on $\Ptwo$.
\begin{itemize}
\item The stabiliser of a general point has order $|k|$.
\item The representation is faithful if and only if $\gcd(|k|,|d|)=1$.
\item The boundary divisor consists of the two lines $\{x=0\}$ and $\{z=0\}$. 
\item If $k \neq d$, the only fixed points are $(1:0:0)$ and
  $(0:1:0)$. If $k =d$, then the fixed points are exactly the points
  on the line $\{z=0\}$.
\end{itemize}
Note that $\tau_{d,k}$ is not equivalent to $\tau_{d,k'}$ for any
$|k|\neq |k'|$, as the stabilisers of a general point are different in
each case. Also $\tau_{d,k}$ is not equivalent to $\tau_{d,-k}$ for
$d\neq 0$ as these have inequivalent action on the line $\{z=0\}$. One
sees easily however that $\tau_{0,k}$ is equivalent to $\tau_{0,-k}$
on applying the automorphism $(b,t) \mapsto (b,t^{-1})$ of $G_0 = \Ga
\times \Gm$.  We also have another faithful generically transitive
action of $G_d$ on $\Ptwo$ given by
\begin{equation*}
  \rho_d(b,t) =
    \begin{pmatrix}
      t^{2d} & bt^{d} & b^2/2 \\
      0      & t^{d}  & b \\
      0      & 0      & 1
    \end{pmatrix},
\end{equation*}
for any $d\neq0$. Here again it is easy to check the following.
\begin{itemize}
\item The stabiliser of a general point has order $2|d|$.
\item The boundary divisor consists of the line $\{z=0\}$ and the conic $\{y^2=2xz\}$. 
\item The only fixed point is $(1:0:0)$.
\end{itemize}
Note that the boundary divisor for $\rho_d$ does not have strict normal
crossings as the conic lies tangent to the line.  Also, it is easy to see that
$\rho_d$ is not equivalent to $\tau_{d,k}$ for any $kd\neq 0$, as there is no
automorphism of $\Ptwo$ that swaps a line and a conic. Our main theorem in
this section is that any faithful generically transitive action of $G_d$ of
$\Ptwo$ is of the above form, up to equivalence.

\begin{theorem}\label{thm:Ptwo}
  Let $d \neq 0$. Any faithful generically transitive action of $G_d$ on
  $\Ptwo$ is equivalent to either $\tau_{d,k}$ for a unique $k \neq 0$ with
  $\gcd(|k|,|d|)=1$ or $\rho_d$. Any faithful generically transitive action of
  $G_0$ on $\Ptwo$ is equivalent to $\tau_{0,1}$.
\end{theorem}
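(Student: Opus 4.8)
The plan is to reduce the classification to Lemma~\ref{lem:Ptwo} by first showing that any faithful generically transitive action of $G_d$ on $\Ptwo$ can be conjugated so that its image lies in the upper triangular matrices. To do this, I would use the fact that $G_d$ is a connected solvable linear algebraic group acting on $\Ptwo$. By Lemma~\ref{lem:lift} (since $\Pic(G_d)=0$) the projective representation $G_d\to\PGL_3$ lifts to a representation $f:G_d\to\GL_3$; faithfulness of the projective representation forces the kernel of $f$ to be central and contained in the scalars, hence trivial or finite, but in fact we may choose a lift that is faithful after noting the centre acts by scalars. Now the Lie--Kolchin theorem applies: a connected solvable subgroup of $\GL_3$ is conjugate to a subgroup of the upper triangular matrices, so after replacing the action by an equivalent one (conjugating by the corresponding element of $\PGL_3$) we are in the situation of Lemma~\ref{lem:Ptwo}.

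Next I would feed the normal form from Lemma~\ref{lem:Ptwo} through the four compatibility conditions and the faithfulness hypothesis, and show that in every surviving case the resulting action is equivalent to one of the $\tau_{d,k}$ or to $\rho_d$. By the condition $\alpha_1\alpha_2\alpha_3=0$ at least one $\alpha_i$ vanishes, and I would split into cases according to which of the $\alpha_i$ are zero. If $\alpha_1=\alpha_2=0$ but $\alpha_3\neq0$, then $k_2=d$ and rescaling the middle coordinate lets us take $\alpha_3=1$, giving exactly $\tau_{d,k_1}$; symmetrically $\alpha_1=\alpha_3=0,\ \alpha_2\neq0$ gives (after permuting coordinates and relabelling, using $k_1=d$) an action equivalent to some $\tau_{d,k}$, and $\alpha_2=\alpha_3=0,\ \alpha_1\neq0$ gives $k_1=k_2+d$ and, after rescaling, again a $\tau_{d,k}$-type action by swapping the roles of the first two coordinates. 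The remaining possibility is that exactly one $\alpha_i$ is zero: here $\alpha_2=0$ is forced (since $\alpha_1\alpha_3\neq0$ would otherwise need the quadratic term, and $\alpha_1=0$ or $\alpha_3=0$ cannot hold when the product of the other two is nonzero in a way that keeps the quadratic entry), so $\alpha_1\alpha_3\neq0$, $\alpha_2=0$, and then the conditions give $k_1=k_2+d$ and $k_2=d$, hence $k_1=2d$; rescaling the first and second coordinates to normalise $\alpha_1$ and $\alpha_3$ produces precisely $\rho_d$. In each case one must also check that conjugating by a permutation matrix or a diagonal matrix is realised by an automorphism of $\Ptwo$ together with an automorphism of $G_d$ (the latter only of the form $(b,t)\mapsto(\lambda b,t)$ or, when $d=0$, also $(b,t)\mapsto(b,t^{-1})$), so that these really are equivalences of $G_d$-actions.

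For the last sentence, concerning $G_0=\Ga\times\Gm$, I would observe that a faithful action forces $\gcd(|k|,|d|)=\gcd(|k|,0)=|k|=1$ in every $\tau$-case and rules out $\rho_d$ since $\rho_d$ is only defined for $d\neq0$; hence the action is equivalent to $\tau_{0,k}$ with $|k|=1$, and $\tau_{0,1}$ is equivalent to $\tau_{0,-1}$ via the automorphism $(b,t)\mapsto(b,t^{-1})$ as already noted in the text, so it reduces to $\tau_{0,1}$. Finally, for $d\neq0$ the uniqueness of $k$ follows from the bullet points preceding the theorem: the order $|k|$ of the general stabiliser is an invariant, and the sign of $k$ is distinguished by the action on the boundary line $\{z=0\}$, so no two of the listed actions are equivalent.

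The main obstacle I expect is the bookkeeping in the case analysis: carefully verifying that the diagonal rescalings and coordinate permutations used to bring each normal form to a standard $\tau_{d,k}$ or $\rho_d$ are compatible with an \emph{automorphism} of $G_d$ (not merely an isomorphism of abstract groups), since the automorphism group of $G_d$ is quite rigid for $d\neq0$ --- essentially only $(b,t)\mapsto(\lambda b, t)$ for $\lambda\in K^*$. Getting the constraint $k_3=0$ to interact correctly with permuting coordinates (which changes which $k_i$ is the normalised one) is the delicate point, and one has to track through whether a permutation of the three coordinates can be absorbed without violating the requirement that the torus acts with weight $1$ on the bottom coordinate. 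A secondary subtlety is the very first step: ensuring the lift $f:G_d\to\GL_3$ can be taken faithful, which uses that the kernel of a lift is a subgroup of the scalar matrices $\Gm\subset\GL_3$ that maps to the identity in $\PGL_3$, together with the observation that $G_d$ has no central $\Gm$ when $d\neq0$ and a direct check when $d=0$.
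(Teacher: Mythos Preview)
Your overall strategy matches the paper's: reduce via Lie--Kolchin (applied to a lift obtained from Lemma~\ref{lem:lift}) to the upper-triangular normal form of Lemma~\ref{lem:Ptwo}, then classify by casework on which $\alpha_i$ vanish. However, your treatment of the case ``exactly one $\alpha_i$ is zero'' contains a genuine gap. You assert that $\alpha_2=0$ is forced, but this is not so: the four conditions in Lemma~\ref{lem:Ptwo} allow both $\alpha_3=0,\ \alpha_1\alpha_2\neq0$ (forcing $k_1=d,\ k_2=0$) and $\alpha_1=0,\ \alpha_2\alpha_3\neq0$ (forcing $k_1=k_2=d$). Your parenthetical justification about ``the quadratic entry'' does not work: when $\alpha_1=0$ or $\alpha_3=0$ the term $\tfrac{\alpha_1\alpha_3}{2}b^2$ simply vanishes, so nothing in the matrix form alone excludes these possibilities.

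These two cases must instead be eliminated using the \emph{hypotheses} of the theorem, and this is exactly what the paper does. When $\alpha_3=0$ and $\alpha_1\alpha_2\neq0$, one checks that every line $y=\lambda z$ is invariant, so the action is not generically transitive. When $\alpha_1=0$ and $\alpha_2\alpha_3\neq0$, the constraint $k_1=k_2=d$ makes the projective representation unfaithful unless $|d|=1$; for $d=1$ the resulting action has boundary $\{z=0\}\cup\{\alpha_2 y=\alpha_3 x\}$, and an automorphism of $\Ptwo$ moving the second line to $\{x=0\}$ shows it is equivalent to $\tau_{1,1}$. Without these two arguments your case analysis is incomplete, and in particular you have not accounted for a faithful generically transitive action that genuinely occurs (the $d=1$ case just described). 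Your worry about arranging a faithful lift $G_d\to\GL_3$ is, by contrast, a non-issue: Lemma~\ref{lem:Ptwo} already works at the level of $\PGL_3$, and the lift is only a bookkeeping device.
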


\begin{proof}
  The action of $G_d$ on $\Ptwo$ gives rise to a faithful
  representation $\rho:G_d \to \PGL_3$. As $G_d$ is solvable, it
  follows from the Lie-Kochin theorem
  \cite[Corollary~III.10.5]{MR1102012} (applied to a lift of $\rho$
  obtained via Lemma \ref{lem:lift}), that we may conjugate by an
  element of $\PGL_3$ to obtain an equivalent action whose image
  consists of only upper triangular matrices. This corresponds to the
  fact that the action on $\Ptwo$ leaves $(1:0:0)$ and $\{z=0\}$
  invariant. Therefore applying Lemma~\ref{lem:Ptwo}, we see that up
  to equivalence $\rho(b,t)$ takes the form
  \begin{equation*}
    \begin{pmatrix}
      t^{k_1} & \alpha_1bt^{k_2} & \alpha_2b + \frac{\alpha_1\alpha_3}{2}b^2\\
      0 & t^{k_2} & \alpha_3b \\
      0 & 0 & 1
    \end{pmatrix}.
  \end{equation*}
  We now proceed by considering the various possibilities on the $\alpha_i$
  given by Lemma~\ref{lem:Ptwo}. First, if $\alpha_1\alpha_2 \neq 0$ then we
  see that $k_2=0,k_1=d$ and $\alpha_3=0$.  This action is not generically
  transitive for any $d$; indeed it preserves the lines $y=\lambda z$ for any
  $\lambda \in K$.

  Next consider the case where $\alpha_1\alpha_3 \neq 0$ and hence
  $\alpha_2=0$. Then Lemma~\ref{lem:Ptwo} implies that $k_1=2d$ and
  $k_2=d$ and therefore $d\neq0$. We claim that this action is
  equivalent to $\rho_d$. Indeed, the conic
  $\{\alpha_1y^2=2\alpha_3xz\}$ is invariant under the action. The
  automorphism of $\Ptwo$ given by $x \mapsto \alpha_3x/\alpha_1$
  moves this conic to the conic $\{y^2=2xz\}$ and gives rise to an
  equivalent action given by
 \begin{equation*}
    \begin{pmatrix}
      t^{2d} & \alpha_3 bt^{d}  & (\alpha_3b)^2/2 \\
      0      & t^{d}    		& \alpha_3b \\
      0      & 0      			& 1
    \end{pmatrix}.
  \end{equation*}
  On performing the automorphism $(b,t) \mapsto (b/\alpha_3,t)$ of $G_d$,
  which rescales $b$, we obtain $\rho_d$.  Thus we may assume that
  $\alpha_1\alpha_3 = 0$ and that $\rho$ takes the form
  \begin{equation*}
    \begin{pmatrix}
      t^{k_1} & \alpha_1bt^{k_2} & \alpha_2b\\
      0 & t^{k_2} & \alpha_3b \\
      0 & 0 & 1
    \end{pmatrix}.
  \end{equation*}
  If $\alpha_2\alpha_3 \neq 0$ then Lemma~\ref{lem:Ptwo} tell us $k_1=k_2=d$
  and $\alpha_1=0$.  Clearly this action is not faithful unless $d = 1$, in
  which case it gives
  \begin{equation*}
    \begin{pmatrix}
      t & 0 & \alpha_2b\\
      0 & t & \alpha_3b \\
      0 & 0 & 1
    \end{pmatrix}.
  \end{equation*}
  The boundary here consists on the lines $\{z=0\}$ and
  $\{\alpha_2y=\alpha_3x\}$. Hence as before, we may perform an automorphism
  of $\Ptwo$ that moves the line $\{\alpha_2y=\alpha_3x\}$ to the line
  $\{x=0\}$ to obtain an action equivalent to $\tau_{1,1}$.

  Thus we have reduced to the case where
  $\alpha_1\alpha_2=\alpha_1\alpha_3=\alpha_2\alpha_3 = 0$.  In particular
  only one of the $\alpha_i$ can be non-zero, and we may even assume that
  $\alpha_i=1$ since applying the automorphism $(b,t) \mapsto (b/\alpha_i,t)$
  of $G_d$ gives an equivalent action. This leaves the following three cases
  \begin{equation*}
    \begin{pmatrix}
      t^{k} & 0 & 0\\
      0 & t^{d} & b \\
      0 & 0 & 1
    \end{pmatrix},\quad
    \begin{pmatrix}
      t^{d} & 0 & b\\
      0 & t^{k} & 0 \\
      0 & 0 & 1
    \end{pmatrix},\quad
    \begin{pmatrix}
      t^{d+k} & bt^{k} & 0\\
      0 & t^{k} & 0 \\
      0 & 0 & 1
    \end{pmatrix}.
  \end{equation*}
  The first action is $\tau_{d,k}$ by definition, whereas the second action is
  seen to be equivalent to $\tau_{d,k}$ on performing the automorphism of
  $\Ptwo$ that swaps $x$ and $y$. As for the third one, we notice that in
  $\PGL_3$ we have
    \begin{equation*}
    \begin{pmatrix}
      t^{d+k} & bt^{k} & 0\\
      0 & t^{k} & 0 \\
      0 & 0 & 1
    \end{pmatrix}=
    \begin{pmatrix}
      t^{d} & b & 0\\
      0 & 1 & 0 \\
      0 & 0 & t^{-k}
    \end{pmatrix},
  \end{equation*}
  which is easily seen to be equivalent to $\tau_{d,-k}$ on performing the
  automorphism of $\Ptwo$ that swaps $y$ and $z$.
\end{proof}

\section{Actions on generalised del Pezzo surfaces} \label{Section:dp}
\subsection{Recap on del Pezzo surfaces}
We now recall various facts that we will need on del Pezzo surfaces,
which can be found for example in \cite{MR579026}, \cite{MR833513} or
\cite{MR940430}.  As before, we work over an algebraically closed
field $K$ of characteristic $0$.

A \emph{generalised del Pezzo surface} $\tS$ is a non-singular projective
surface whose anticanonical class $-K_\tS$ is big and nef.  A normal
projective surface $S$ with ample anticanonical class $-K_S$ is called an
\emph{ordinary del Pezzo surface} if it is non-singular and \emph{singular del
  Pezzo surface} if its singularities are rational double points. Ordinary del
Pezzo surfaces and minimal desingularisations of singular del Pezzo surfaces
are generalised del Pezzo surfaces, and conversely every generalised del Pezzo
surface arises in this way (see \cite[Proposition 0.6]{MR940430}).

The \emph{degree} of a generalised del Pezzo surface $\tS$ is the
self-intersection number $(-K_\tS, -K_\tS)$ of its anticanonical
class. The degree of a singular del Pezzo surface $S$ is defined to be
the degree of its minimal desingularisation.  For $n \in \NN$, a
$(-n)$-curve (or simply a \emph{negative curve}) on a non-singular
projective surface is a rational curve with self-intersection number
$-n$. On generalised del Pezzo surfaces, only $(-1)$- or $(-2)$-curves
may occur (see \cite[page~$29$]{MR940430}).  Moreover, a generalised
del Pezzo surface is ordinary if and only if it contains no
$(-2)$-curves.

A theorem of Demazure (see \cite[Proposition 0.4]{MR940430}) says that
any generalised del Pezzo surface $\tS$ is isomorphic to either
$\PP^2$ (degree $9$), $\PP^1 \times \PP^1$, the Hirzebruch surface
$\FF_2$ (both of degree $8$) or is obtained from $\PP^2$ by a sequence
\begin{equation*}
  \tS = \tS_r \xrightarrow{\rho_r} \tS_{r-1} \to \dots \to \tS_1 \xrightarrow{\rho_1} \tS_0 = \PP^2
\end{equation*}
of $r \le 8$ blow-ups $\rho_i : \tS_i \to \tS_{i-1}$ of points $p_i \in
\tS_{i-1}$ not lying on a $(-2)$-curve on $\tS_{i-1}$, for $i=1, \dots, r$
(with $\tS$ of degree $9-r$).  The Picard group $\Pic(\tS)$ of a generalised
del Pezzo surface is a torsion-free abelian group of rank $10-\deg(\tS)$.  For
a generalised del Pezzo surface $\tS$ of degree $\geq 3$, the anticanonical
linear system defines a birational morphism $\pi : \tS \to S \subset
\PP^{\deg(\tS)}$ to a surface $S$.  If $\tS$ is ordinary, then $\pi$ is in
fact a closed immersion. Otherwise, $\pi$ contracts precisely the
$(-2)$-curves on $\tS$ to the singularities of $S$, and $S$ is a singular del
Pezzo surface with minimal desingularisation $\tS$.

The singularity type of a singular del Pezzo surface $S$ is defined to be
the dual graph of the configuration of $(-2)$-curves on the minimal desingularisation
$\tS$. These graphs are always Dynkin diagrams and are labelled by 
(sums of) $\mathbf{A}_n$ for $n\ge 1$, $\mathbf{D}_n$ for $n
\ge 4$, $\Esix$, $\Eseven$, $\Eeight$.
Moreover in each degree, there are only finitely many possibilities for the
configurations of the negative curves that may occur on generalised del Pezzo surfaces; these
\emph{types} can be distinguished by the $\mathbf{ADE}$-types of the Dynkin
diagrams for the $(-2)$-curves and the number of lines. The latter can be left
out in most cases; exceptions are two $\Athree$-types (with four resp.\ five
lines) in degree $4$ and two $\Aone$-types (with three resp.\ four lines) in
degree $6$; all other exceptions have degree $1$ and $2$ and are not relevant
for us. For some types there are infinitely many isomorphy classes, but for all
types that turn out to be equivariant compactifications of homogeneous spaces
for $\Ga \rtimes \Gm$, we will see that there is precisely one such surface
up to isomorphism.

\subsection{Actions on generalised del Pezzo surfaces}
We now consider the classification of those generalised del Pezzo surfaces
that admit a generically transitive action of $G_d$ for some $d$, with the aim
of proving Theorem~\ref{thm:dp}. It turns out that such surfaces must satisfy
a special geometric condition.

\begin{lemma}\label{lemma:candidates}
  Let $\tS$ be a generalised del Pezzo surface that is an equivariant
  compactification of a homogeneous space for $G_d$ for some $d$.  Then $$\#\{
  \text{negative curves on } \tS\} \leq \rk \Pic \tS + 1.$$
\end{lemma}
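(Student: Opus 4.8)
The plan is to exploit the open dense orbit of the $G_d$-action together with the structure of the boundary. Since $\tS$ is an equivariant compactification of a homogeneous space for $G_d$, the complement of the open orbit $O$ is a closed subset $Z = \tS \setminus O$; because $\tS$ is a surface and $O$ is dense and open, $Z$ is a curve (possibly together with finitely many points, but on a normal surface a proper closed subset of pure codimension at least one is a divisor plus isolated points; in fact I expect $Z$ to be of pure codimension one here, since the complement of an orbit is a union of lower-dimensional orbits and $G_d$ has no one-dimensional orbits that are not contained in the closure of higher ones — more carefully, one argues as in the discussion after the first definition that the boundary is a divisor). So write $Z = B_1 \cup \dots \cup B_s$ for its irreducible components. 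The key geometric input is that the negative curves on $\tS$ are special, so I would first argue that \emph{every negative curve on $\tS$ is contained in the boundary $Z$}: a $(-1)$- or $(-2)$-curve $C$ is rigid (it is the unique effective divisor in its class, since its self-intersection is negative), hence $G_d$-invariant as $G_d$ is connected, hence not contained in the open orbit, so $C \subseteq Z$ and thus $C$ is one of the $B_i$. Therefore it suffices to bound the total number of irreducible components of $Z$.

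Next I would bound the number of boundary components. Pick a point $p$ in the open orbit with stabiliser $H$ (finite, since $\dim G_d = \dim \tS = 2$), so $O \cong G_d/H$, which by Lemma~\ref{lem:finite_G_d} and the remark following it is (up to quotienting) again a group $G_{d'}$; in any case $O$ is an affine surface with $\Pic(O) = 0$ — indeed $\Pic(G_d) = 0$ as noted in the paper, and $\Pic$ of a finite quotient is torsion while $\Pic$ of the homogeneous space injects appropriately, or more simply one uses that $O$ is a rational affine surface with trivial divisor class group because $G_d$ is a product of $\Ga$ and $\Gm$ as a variety, so $O$ is an open subset of $\mathbf{A}^1 \times \mathbf{A}^1$ or of $\mathbf{A}^1 \times \Gm$, etc. The excision sequence for divisor class groups
\begin{equation*}
  \bigoplus_{i=1}^{s} \ZZ \cdot [B_i] \longrightarrow \Pic(\tS) \longrightarrow \Pic(O) \longrightarrow 0
\end{equation*}
then shows that the classes $[B_1], \dots, [B_s]$ generate $\Pic(\tS)$, a free abelian group of rank $\rk \Pic \tS = 10 - \deg \tS$. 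This gives $s \ge \rk \Pic \tS$ but is the wrong direction; so instead I would use this surjectivity to control the relations. A cleaner route: since $O$ is affine, $Z$ is the support of an effective ample (or at least big) divisor — concretely $Z = \tS \setminus O$ is the complement of an affine open, hence connected and of positive arithmetic genus contribution, and in particular supports a divisor whose class lies in the interior of the effective cone. Combined with the fact that $\tS$ is (generalised) del Pezzo, so $-K_\tS$ is big and nef and $\Pic(\tS)$ has rank $10 - \deg$, one knows the number of negative curves is already finite; what we want is the sharp inequality $s \le \rk \Pic \tS + 1$.

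The way I would actually close the argument is to use that $O$ is not merely affine but is an \emph{open orbit of a two-dimensional group}, so it is a toric-like variety: it contains an open subset isomorphic to $\Gm^2$ or $\Gm \times \Ga$ (the open orbit of the maximal torus, resp.\ of a Borel), OR directly one uses that $G_d$ as a variety is $\mathbf{A}^1 \times \Gm$ (for $d \neq 0$ the semidirect product is still isomorphic as a variety, not as a group, to $\Ga \times \Gm$), so $O \cong (\mathbf{A}^1 \times \Gm)/H$ with $H$ cyclic acting through the $\Gm$-factor; such a quotient is again $\mathbf{A}^1 \times \Gm$. Hence $O$ is isomorphic to an open subset of $\Ptwo$ obtained by removing a line and a point, or equivalently $\mathbf{A}^1 \times \Gm \hookrightarrow \PP^1 \times \PP^1$ as the complement of two fibres and one section-at-infinity — in any toric model, $\mathbf{A}^1 \times \Gm$ is a toric surface with exactly $3$ boundary divisors. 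Now $\tS \to (\text{a toric compactification of } O)$ is a birational map, and the boundary divisors of $\tS$ either map to boundary divisors of the toric model or are contracted (exceptional over points of that toric boundary or of $O$); but the points of $O$ are permuted by $G_d$ with no fixed points except possibly... — here I would invoke connectedness again to say the extra blow-up centres must lie on the boundary. Tracking this through a sequence of $r = 9 - \deg \tS$ blow-ups: each blow-up of a point \emph{on} the current boundary increases both the Picard rank and the number of boundary components by exactly one, while a blow-up of a point \emph{not} on the boundary (which, I claim, can happen at most once, essentially at the very first step from $\PP^2$ where the open orbit's boundary might meet things non-transversally) increases the Picard rank by one and the boundary count by one as well but can create an extra component. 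Assembling: $s = (\text{boundary components of the toric model of } O) + (\text{net increase}) \le 3 + (r - 1) \cdot 1 + \dots$; a careful bookkeeping gives $s \le \rk \Pic \tS + 1$, and since every negative curve is among the $B_i$, we conclude.

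\textbf{Main obstacle.} The delicate point is the bookkeeping in the last paragraph: controlling how the number of boundary components changes under each blow-up in Demazure's factorisation, and in particular showing that the "defect" — the amount by which the boundary component count can exceed the Picard rank — never grows beyond $1$. This amounts to showing that at most one blow-up centre in the sequence fails to lie on the (strict transform of the) boundary divisor, which should follow from the observation that $G_d$ acts on each $\tS_i$ with at most finitely many fixed points, all lying on the boundary once the boundary is a divisor, together with the fact that an orbit-point blow-up centre would have to be a fixed point. Making this precise — especially handling the base case $\tS = \PP^2$, $\PP^1 \times \PP^1$, or $\FF_2$ and reconciling it with the explicit actions $\tau_{d,k}$ and $\rho_d$ from Theorem~\ref{thm:Ptwo} — is where the real work lies; the rigidity of negative curves and the excision sequence are routine.
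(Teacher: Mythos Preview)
Your first step is exactly right and matches the paper: every negative curve is rigid in its class, hence $G_d$-invariant (via linearisation, Lemma~\ref{lem:lift}), hence contained in the boundary $Z$. So the problem reduces to bounding the number $s$ of irreducible components of $Z$.

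Where your argument stalls is the bookkeeping, and the obstacle you flag is largely self-inflicted. You try to build $\tS$ \emph{upward} from a toric model of $O$, which forces you to worry about blow-up centres possibly lying off the boundary. In fact no such centre can exist: an equivariant blow-up must be centred at a fixed point, and a fixed point cannot lie in the open orbit (stabilisers there are finite, not all of $G_d$). So your feared ``defect'' never occurs; but you have not actually established this, and your comparison with a separate toric compactification of $O$ introduces an unnecessary second birational model to track.

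The paper avoids all of this by inducting \emph{downward}. If $\tS$ is not $\Pone\times\Pone$ or $\FF_2$ (where the bound is trivial), it carries a $(-1)$-curve $E$; by the first step $E$ is a boundary component. Contracting $E$ gives a $G_d$-equivariant morphism $\pi:\tS\to Y$ by Lemma~\ref{lem:blow_down}, and since $\pi$ is an isomorphism away from $E$, the boundary of $Y$ has exactly one fewer component than that of $\tS$. Iterating $r$ times lands on $\Ptwo$, and then Theorem~\ref{thm:Ptwo} supplies the base case: every generically transitive $G_d$-action on $\Ptwo$ has boundary consisting of exactly two irreducible curves (two lines for $\tau_{d,k}$, a line and a conic for $\rho_d$). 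Hence $s=r+2=\rk\Pic\tS+1$ exactly, and the inequality follows.

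The two missing ingredients in your proposal are thus (i) the clean downward induction via contraction of $(-1)$-curves, and (ii) the direct appeal to the classification of actions on $\Ptwo$ for the base case. Both are short once seen; your excision and toric-model detours can be dropped entirely.
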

\begin{proof}
  First note that if $\tS \cong \Pone \times \Pone$ or $\tS \cong
  \FF_2$, then there is at most one negative curve and the inequality
  trivially holds.  So we may assume that $\tS$ is obtained from $\Ptwo$ by a
  sequence of $r$ blow-ups. To prove the inequality in this case, it suffices
  to show that the boundary of the action consists of $r+2=\rk \Pic \tS +1$
  irreducible curves. Indeed, let $E$ be a negative curve on $\tS$. By
  Lemma~\ref{lem:lift}, the line bundle $\mathcal{O}_{\tS}(E)$ admits a
  $G_d$-linearisation, in particular the divisor class of $E$ is invariant
  under the action of $G_d$. As $E$ is the unique effective curve in its
  divisor class, we see that $E$ itself is invariant under the action of $G_d$
  and therefore $E$ must lie on the boundary. The fact that the boundary
  consists of $r+2$ irreducible curves then gives the required inequality.

  To prove the claim we proceed by induction. Let $X$ be a smooth projective
  equivariant compactification of a homogeneous space for $G_d$ that contains
  a $(-1)$-curve $E$ and let $\pi:X \to Y$ be the map given by contracting
  $E$. Note that we may assume that $Y$ is an equivariant compactification of
  a homogeneous space for $G_d$ and that $\pi$ is $G_d$-equivariant by
  Lemma~\ref{lem:blow_down}. As $\pi$ is an isomorphism outside $E$, we see
  that $X$ has exactly one more boundary component than $Y$. Applying this
  inductively to $\tS$, we see that the boundary of the action on $\tS$
  consists of $r+n$ irreducible curves, where $n$ is the number of irreducible
  curves on the boundary of the action on $\Ptwo$. However, by the
  classification given in Theorem~\ref{thm:Ptwo} we know that $n=2$. This
  proves the claim and hence completes the proof of the lemma.
\end{proof}

From the classification of generalised del Pezzo surfaces that can be
found in \cite{math.AG/0604194}, for example, it is straightforward to
write down the list of surfaces that satisfy the condition of
Lemma~\ref{lemma:candidates}. These are shown in
Figure~\ref{fig:blow-ups}.
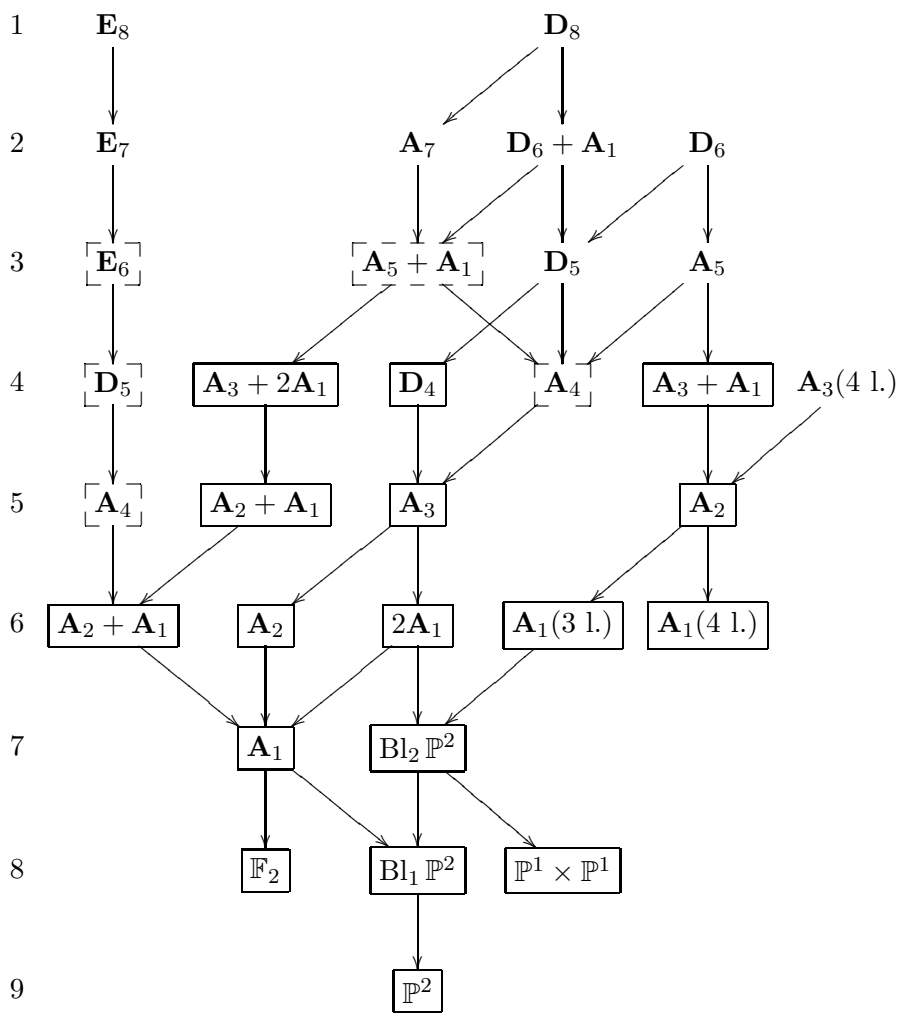
\begin{figure}[ht]
  \begin{equation*}
    \xymatrix@R=.4in @C=0.08in{
   1  &    \dpbox{1}{\Eeight} \ar[d]   &      &      &              \Deight \ar[d] \ar[ld]   &      &      &  \\
   2   &    \dpbox{2}{\Eseven}       \ar[d]    &      &    \Aseven \ar[d]   &              \Dsix+\Aone \ar[d] \ar[ld]    &    \Dsix \ar[d] \ar[ld]   &      &     \\
   3   &    \hsbox{3}{\Esix} \ar[d]    &       &          \hsbox{3}{\Afive+\Aone} \ar[ld] \ar[rd]    &    \Dfive \ar[d] \ar[ld]   &    \Afive  \ar[d] \ar[ld]    &      &     \\
   4   &    \hsbox{4}{\Dfive}       \ar[d]    &    \ecbox{6}{\Athree+2\Aone}  \ar[d]  &    \ecbox{6}{\Dfour} \ar[d]    &    \hsbox{4}{\Afour} 	  	  \ar[ld]    &    \ecbox{6}{\Athree +\Aone} \ar[d]    &    \Athree (4 \text{ l.}) \ar[ld]   &     \\
   5   &    \hsbox{5}{\Afour} \ar[d]    &   \ecbox{6}{\Atwo+\Aone} \ar[ld]     &    \ecbox{6}{\Athree} \ar[d]  \ar[ld]  &        &    \ecbox{6}{\Atwo}  \ar[d] \ar[ld]    &       &     \\
   6   &    \ecbox{6}{\Atwo+\Aone} \ar[rd]    &   \ecbox{6}{\Atwo} \ar[d]    &          \ecbox{6}{2\Aone} \ar[ld] \ar[d]    &    \ecbox{6}{\Aone (3 \text{ l.})} \ar[ld]    &    \ecbox{6}{\Aone (4 \text{ l.})}  &       &     \\
   7   &       &    \ecbox{7}{\Aone} \ar[rd] \ar[d]    &    \ecbox{7}{\Bl_2\Ptwo} \ar[d] \ar[rd]   &       &       &       &     \\
   8   &       &    \ecbox{8}{\FF_2}    &    \ecbox{8}{\Bl_1\Ptwo} \ar[d]    &    \ecbox{7}{\Pone \times \Pone}    &       &       &     \\
   9  &            &            &          \ecbox{9}{\Ptwo}   &            &    	    &    	    &  
    }
  \end{equation*}
  \caption{Generalised del Pezzo surfaces $\tS$ in increasing degree with
    $\#\{\text{negative curves on $\tS$}\} \le \rk\Pic(\tS) + 1$. The boxed
    ones are exactly the equivariant compactifications of $G_d$ for some
    $d$. The dashed ones are exactly the equivariant compactifications of a
    homogeneous space for $G_d$ for some $d$.  Arrows denote blow-up maps (in
    degree $\ge 4$, only maps used in our proofs are included).}
  \label{fig:blow-ups}
\end{figure} 

We note that for each of the types of degree \emph{at most three} given in
Figure~\ref{fig:blow-ups}, there is a \emph{unique} surface over $K$ with that
type, up to isomorphism. Indeed, for the surfaces of degree three this follows
from the classification given in \cite{MR80f:14021}. This also implies
uniqueness for all surfaces of degree greater than three, except for perhaps
the quartic del Pezzo surface of type $\Athree$ with four lines. However, we
also have uniqueness in this case on noticing that such a surface is obtained
by contracting a unique $(-1)$-curve on a del Pezzo surface of degree three
and type $\Afour$. There is again a unique surface of this type by
\cite{MR80f:14021}. Note that this result does not hold for some of the lower
degree surfaces in Figure~\ref{fig:blow-ups}, for example there are infinitely
many generalised del Pezzo surfaces of degree two and type $\Dsix$ up to
isomorphism (see \cite[Theorem~5.7]{MR1933881}).

Next, it follows from Lemma \ref{lem:blow_down} that we need only consider the
``extremal" surfaces in Figure~\ref{fig:blow-ups}, namely if a surface is an
equivariant compactification of (a homogeneous space for) some $G_d$, then so is
any surface that lies below it in Figure~\ref{fig:blow-ups}.  Conversely, if
a surface is not an equivariant compactification of (a homogeneous space for)
$G_d$, then no surface in Figure~\ref{fig:blow-ups} that lies above it is
either.

We now proceed to classify the generically transitive actions of $G_d$ on some
of the surfaces in Figure~\ref{fig:blow-ups} up to equivalence, for each $d
\in \ZZ$. We briefly outline the method that we will use. Suppose that
$\rho:\tS \to \Ptwo$ is the composition of $r \leq 6$ blow-ups of $\Ptwo$ and
that $\tS$ admits a generically transitive action of $G_d$ for some $d$. Then
by Lemma~\ref{lem:blow_down}, we obtain a generically transitive action of
$G_d$ on $\Ptwo$ in such a way that $\rho$ is $G_d$-equivariant. Also in every case
we will consider, we will be able to choose $\rho$ in such a way that the
line $\{z=0\}$ and the point $(1:0:0)$ are images of negative curves on $\tS$.
As the negative curves on $\tS$ are invariant under the action (see the proof
of Lemma~\ref{lemma:candidates}), the line $\{z=0\}$ and the point $(1:0:0)$
must also be invariant under the action on $\Ptwo$, and hence the action will
have the form given by Lemma~\ref{lem:Ptwo}.

Therefore, we are reduced to the following question: for which of the
actions given in Lemma~\ref{lem:Ptwo} is the map $\rho$ $G_d$-equivariant? This is
equivalent to asking whether the inverse of $\rho$ is an $G_d$-equivariant
birational map $\rho^{-1}:\Ptwo \dashrightarrow \tS$. Also, by
Proposition~\ref{prop:desing} this is again equivalent to asking whether or
not $\pi \circ \rho^{-1}$ is $G_d$-equivariant, where $\pi:\tS \to S$ denotes the
map to the associated singular del Pezzo surface. As $r \leq 6$ however, we
see that $S \subset \PP^{9-r}$ and moreover the map $\pi \circ \rho^{-1}$ is
given by choosing a basis for some linear series $V \subset H^0(\Ptwo,
\mathcal{O}_{\Ptwo}(3))$.  We may therefore appeal to
Lemma~\ref{lemma:equivariant}, and reduce to determining whether or not $V$ is
invariant under the action of $G_d$ on $H^0(\Ptwo, \mathcal{O}_{\Ptwo}(3))$.

We now show this method in action by considering the extremal surfaces
given in Figure~\ref{fig:blow-ups}, beginning with the one such
surface of degree five.

\begin{lemma}\label{lem:quintic}
  The quintic del Pezzo surface of type $\Afour$ admits a unique structure as
  an equivariant compactification of a homogeneous space for $G_1$ (but none
  for $G_0$). It is not an equivariant compactification of $G_d$ for any $d$.
\end{lemma}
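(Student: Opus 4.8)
The plan is to use the method outlined just before the statement: realize the minimal desingularization $\tS$ of the quintic del Pezzo surface of type $\Afour$ as a blow-up $\rho\colon \tS\to\Ptwo$ of four points in such a way that the line $\{z=0\}$ and the point $(1:0:0)$ are images of negative curves, so that any generically transitive $G_d$-action on $\tS$ descends to one of the upper-triangular actions of Lemma~\ref{lem:Ptwo}. Concretely, the $\Afour$ quintic arises from blowing up four points $p_1,\dots,p_4$ on $\Ptwo$ in "almost general position" lying on a chain: e.g.\ $p_1=(0:0:1)$, $p_2$ infinitely near $p_1$ in a chosen direction, $p_3$ infinitely near $p_2$, and $p_4=(1:0:0)$ (the precise configuration is the one giving an $\Afour$ chain of $(-2)$-curves plus the appropriate lines). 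The key point is that the anticanonical map $\pi\circ\rho^{-1}\colon\Ptwo\dashrightarrow S\subset\PP^4$ is given by the linear series $V\subset H^0(\Ptwo,\mathcal O(3))$ of cubics passing through $p_1,\dots,p_4$ with the prescribed infinitely-near conditions; $V$ is $5$-dimensional.

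First I would write down an explicit basis of $V$ (monomials/cubics in $x,y,z$ vanishing to the required orders at the $p_i$), and then ask for which of the actions in Lemma~\ref{lem:Ptwo} the subspace $V$ is invariant under the induced $G_d$-action on $H^0(\Ptwo,\mathcal O(3))$. By Lemma~\ref{lemma:equivariant}, invariance of $V$ is exactly equivalent to $\pi\circ\rho^{-1}$ being $G_d$-equivariant, hence to $\tS$ being an equivariant compactification of a homogeneous space for $G_d$. Running through the cases of Lemma~\ref{lem:Ptwo} ($\tau_{d,k}$-type diagonal actions and the $\rho_d$-type action with a $b^2$ term), one checks the weight conditions on the cubic monomials: the diagonal torus acts on each monomial by a character, and invariance of $V$ forces relations among $k_1,k_2,d$; the unipotent part must then preserve $V$ as well. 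I expect this to single out a single action, defined over $G_1$ (after using Lemma~\ref{lem:finite_G_d} to reduce to faithful actions, so $d=1$), and to rule out $d=0$ because the $G_0$-actions on $\Ptwo$ from Theorem~\ref{thm:Ptwo} (only $\tau_{0,1}$) do not preserve the needed infinitely-near structure — equivalently, $V$ is not $\tau_{0,1}$-invariant.

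For the "unique structure" claim I would argue that any generically transitive $G_1$-action on $\tS$ descends (via Lemma~\ref{lem:blow_down}) to one of the finitely many actions on $\Ptwo$ from Theorem~\ref{thm:Ptwo}, and that among these exactly one has $V$ invariant; any two such actions on $\tS$ then differ by an automorphism covering an equivalence on $\Ptwo$, giving uniqueness up to equivalence. For the last sentence — that $\tS$ is not an equivariant compactification of $G_d$ for any $d$ — I would compute the order of the stabilizer of a general point: since the action descends to one of the $\tau$- or $\rho$-type actions on $\Ptwo$, and birational $G_d$-maps preserve the stabilizer order of points in the open orbit (the remark after Proposition~\ref{prop:desing}), the stabilizer on $\tS$ has the same nontrivial order ($|k|>1$ or $2|d|$, in any case $>1$ for the surviving action), so the action is never generically free.

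The main obstacle will be the bookkeeping in the reduction step: one must check that $\tS$ genuinely admits a blow-down to $\Ptwo$ with $\{z=0\}$ and $(1:0:0)$ among the images of negative curves, which requires identifying, in the $\Afour$ configuration, a $(-1)$-curve mapping to a line and a point on the boundary that is fixed — this is where the combinatorics of the $\mathbf{ADE}$ graph and the lines must be used carefully — and then verifying that the resulting linear series $V$ is genuinely base-point-free after blowing up (so Lemma~\ref{lemma:equivariant} applies) and carrying out the weight computation for every case of Lemma~\ref{lem:Ptwo} without missing a sporadic coincidence of exponents.
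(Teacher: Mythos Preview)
Your approach is exactly the paper's: it writes down the anticanonical rational map
\[
(x:y:z)\mapsto (xz^2: yz^2:y^2z:xyz:z^3:-(y^3+x^2z))
\]
(with base point $(1:0:0)$ and contracting $\{z=0\}$), then checks invariance of this linear series under the upper-triangular actions of Lemma~\ref{lem:Ptwo}, the mixed term $y^3+x^2z$ forcing $2k_1=3k_2$, hence $(k_1,k_2)=(3k,2k)$. One correction to your expectation: the case analysis on the $\alpha_i$ does \emph{not} force $d=1$ after passing to faithful actions --- one gets faithful actions for $G_1$, $G_2$, and $G_3$ (from $\alpha_1\ne 0$, $\alpha_3\ne 0$, $\alpha_2\ne 0$ respectively) --- but in every case the resulting action on $\Ptwo$ is some $\tau_{d,k}$ with $|k|\ge 2$, so the general stabiliser is nontrivial and the final clause follows.
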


\begin{proof}
  The quintic type $\Afour$ is defined by the equations
  \begin{align*}
    &x_2x_4-x_1^2=x_3x_4-x_0x_1=x_0x_2-x_1x_3\\
    ={}&x_1x_2+x_0^2+x_4x_5=x_2^2+x_0x_3+x_1x_5=0.
  \end{align*}
  The associated rational map from $\Ptwo$ is given by
  \begin{equation*}
    (x:y:z)\mapsto (xz^2: yz^2:y^2z:xyz:z^3:-(y^3+x^2z)).
  \end{equation*}
  This is not defined at $(1:0:0)$, and moreover the line $\{z=0\}$ is
  mapped to the singularity $(0:0:0:0:0:1)$. Therefore the associated
  action on $\Ptwo$ must leave these subvarieties invariant, hence is
  of the form given in Lemma~\ref{lem:Ptwo}. For it to be equivariant,
  the associated linear series of cubic forms must be invariant of the
  action of $G_d$, by Lemma~\ref{lemma:equivariant}.  One can check
  that this happens if and only if $2k_1=3k_2$ (this condition comes
  from the term $-(y^3+x^2z)$). So for some $k\neq 0$, we have
  $(k_1,k_2)=(3k,2k)$. If two of $\alpha_1,\alpha_2,\alpha_3$ are
  non-zero, this leads to $d=k=0$, and the action is not generically
  transitive. If only $\alpha_1 \ne 0$, we have $d=k$, and the action
  is equivalent to $\tau_{d,-2d}$. If only $\alpha_2 \ne 0$, we have
  $d=3k$, and the action is equivalent to $\tau_{3k,2k}$. If only
  $\alpha_3 \ne 0$, we have $d=2k$, and the action is equivalent to
  $\tau_{2k,3k}$.  In any case, the stabiliser of a general point has
  order at least two and so the action is not generically free.
\end{proof}

We now consider the extremal surfaces of degree four. 

\begin{lemma}\label{lem:quartic}
   For quartic generalised del Pezzo surfaces we have the following.
  \begin{itemize}
  \item The surface of type $\Athree+2\Aone$ is an equivariant
    compactification of $G_d$ for all $d \in \ZZ$.
  \item The surface of type $\Dfour$ admits a unique structure as an
    equivariant compactification of $G_2$ (but none for other $G_d$
    with $d\geq 0$).
  \item The surface of type $\Afour$ admits a unique structure as an
    equivariant compactification of a homogeneous space for $G_1$ (but
    none for $G_0$).  It is also not an equivariant compactification
    of $G_d$ for any $d$.
  \item The surface of type $\Athree+\Aone$ admits a unique structure
    as an equivariant compactification of $G_1$ (but none for other
    $G_d$ with $d\geq 0$).
  \item The surface of type $\Athree$ (four lines) is not an
    equivariant compactification of a homogeneous space for $G_d$ for
    any $d$.
   \end{itemize}
\end{lemma}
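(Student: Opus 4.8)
The plan is to treat each of the five quartic generalised del Pezzo surfaces in Figure~\ref{fig:blow-ups} uniformly, following the method outlined after Lemma~\ref{lemma:candidates}. For each surface $\tS$, one realises $\tS$ as a composition $\rho:\tS\to\Ptwo$ of $r=5$ blow-ups chosen so that the line $\{z=0\}$ and the point $(1:0:0)$ are the images of negative curves; this is possible because each of these surfaces has exactly $r+2=7$ negative curves forming the boundary (by the proof of Lemma~\ref{lemma:candidates}), and one can always arrange two of them to sit over a line and a point on $\Ptwo$. Consequently, any generically transitive $G_d$-action on $\tS$ descends via Lemma~\ref{lem:blow_down} to one on $\Ptwo$ of the form given in Lemma~\ref{lem:Ptwo}, and by Proposition~\ref{prop:desing} and Lemma~\ref{lemma:equivariant} the question reduces to: for which values of $(k_1,k_2,\alpha_1,\alpha_2,\alpha_3)$ is the linear series $V\subset H^0(\Ptwo,\mathcal{O}_{\Ptwo}(3))$ defining the anticanonical map $\pi\circ\rho^{-1}:\Ptwo\dashrightarrow S\subset\PP^4$ invariant under the induced $G_d$-action on cubic forms? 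Exactly as in Lemma~\ref{lem:quintic}, invariance of $V$ forces a linear relation among $k_1,k_2$ (and the list of constraints from Lemma~\ref{lem:Ptwo} on the $\alpha_i$), which one then solves case-by-case on which of $\alpha_1,\alpha_2,\alpha_3$ are non-zero.

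Concretely, I would carry this out as follows. First, write down explicit defining equations in $\PP^4$ and the explicit cubic parametrisation $\Ptwo\dashrightarrow S$ for each of the five types (these are available in \cite{math.AG/0604194}); I expect the monomials appearing to be such as $xz^2, yz^2, y^2z, xyz, z^3$ together with one ``extra'' cubic (as $-(y^3+x^2z)$ for the quintic $\Afour$) whose invariance pins down the weight relation. Second, compute the $G_d$-weights of each monomial under the diagonal-plus-unipotent action of Lemma~\ref{lem:Ptwo} and determine the invariance condition; for the type $\Athree+2\Aone$ surface I expect the condition to be satisfiable with all $\alpha_i=0$ so the torus $\tau_{d,k}$ works for every $d$ (giving a generically free, hence equivariant-compactification-of-$G_d$, structure for all $d$), whereas for $\Dfour$, $\Athree+\Aone$ and $\Afour$ the extra monomial forces a rigid weight ratio that, combined with $\gcd$-faithfulness and the $\alpha_i$-constraints, leaves only $d=2$ (resp.\ $d=1$) with a single orbit structure up to equivalence, and for $\Afour$ additionally forces the stabiliser to have order $\ge 2$ so it is only an equivariant compactification of a \emph{homogeneous space}. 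Third, for the type $\Athree$-with-four-lines surface, I would argue directly that \emph{no} action of the form in Lemma~\ref{lem:Ptwo} makes the relevant linear series invariant — most cleanly by using the fact (noted in the text just before Lemma~\ref{lemma:candidates} and in the paragraph on uniqueness) that this surface is obtained by contracting a unique $(-1)$-curve on the cubic surface of type $\Afour$, so a $G_d$-structure on it would lift, via Lemma~\ref{lem:blow_up} applied to the $G_d$-invariant reverse blow-up, contradicting the (to-be-established) non-existence of a $G_d$-structure on the $\Afour$ cubic; alternatively one checks the cubic-forms condition has no solution directly.

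For the uniqueness claims (``a \emph{unique} structure''), after finding that only one $(k_1,k_2)$ and one non-zero $\alpha_i$ survive, I would invoke the normalisation arguments already used in the proof of Theorem~\ref{thm:Ptwo}: rescaling $b\mapsto b/\alpha_i$ via an automorphism of $G_d$, and applying an automorphism of $\Ptwo$ fixing the relevant boundary curves, to bring the action to a canonical form; since $\rho$ is $G_d$-equivariant and determined up to such automorphisms by the configuration of negative curves, the $G_d$-structure on $\tS$ is unique up to equivalence. For the non-existence claims for other $G_d$ with $d\ge0$, I note that $G_d\cong G_{-d}$ so it suffices to treat $d\ge0$, and that $d=0$ is excluded because $G_0=\Ga\times\Gm$ acts with image in the diagonal-plus-one-unipotent-block form only when the weight relation forces a non-faithful or non-transitive action, exactly as in the $\Afour$ quintic case.

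The main obstacle I anticipate is bookkeeping rather than conceptual: one must correctly identify, for each of the five surfaces, the right birational model $\rho:\tS\to\Ptwo$ (i.e.\ the right choice of which negative curves lie over $\{z=0\}$ and $(1:0:0)$) so that the descended action genuinely has the Lemma~\ref{lem:Ptwo} shape, and then carefully track the weight of every monomial in the cubic parametrisation — a single sign or exponent error propagates into a wrong list of admissible $d$. The genuinely delicate point is distinguishing ``equivariant compactification of $G_d$'' from ``of a homogeneous space for $G_d$'': this hinges on computing the order of the stabiliser of a general point (equivalently, whether the induced map $G_d\to\Ptwo$, resp.\ $G_d\to\tS$, is generically free), which by the remark after Proposition~\ref{prop:desing} equals the stabiliser order on $\Ptwo$, and the quintic-$\Afour$ computation shows the $\Afour$ quartic inherits a forced stabiliser of order $\ge2$.
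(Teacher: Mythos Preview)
Your overall strategy matches the paper's exactly: descend to $\Ptwo$, invoke Lemma~\ref{lem:Ptwo}, and test invariance of the cubic linear series case by case. The paper simply carries out the five computations explicitly (giving defining equations, the rational map, and the resulting constraints on $k_1,k_2,\alpha_i$), whereas you only sketch the plan; so the substance is the same, but two of your stated expectations are wrong and would lead you astray if you followed them.

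First, for type $\Athree+2\Aone$ you write that the invariance condition should be ``satisfiable with all $\alpha_i=0$''. That cannot work: with $\alpha_1=\alpha_2=\alpha_3=0$ the matrix of Lemma~\ref{lem:Ptwo} is diagonal, so the $\Ga$-factor acts trivially and the representation is not faithful (nor generically transitive) as a $G_d$-action. The paper in fact bypasses the linear-series analysis for this surface and writes down an explicit faithful representation $G_d\to\GL_5$ on $S\subset\PP^4$ directly (using that $S$ is toric). If you insist on the linear-series route, you must keep one $\alpha_i\ne0$; the point is that the resulting weight relation (e.g.\ $k_2=d$ when $\alpha_3\ne0$) still leaves $k_1$ free, which is what allows every $d$.

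Second, your alternative argument for type $\Athree$ (four lines) via the cubic of type $\Afour$ has a gap. To apply Lemma~\ref{lem:blow_up} you need the blown-up point on the quartic to be $G_d$-invariant, but you have not shown that the specific point whose blow-up yields the cubic $\Afour$ is fixed by an arbitrary generically transitive $G_d$-action. (It is not automatic: the point in question lies on a $(-1)$-curve, and $(-1)$-curves are invariant as curves but need not be pointwise fixed.) The paper instead does the direct computation you mention as an alternative: the rational map forces three fixed points on $\{z=0\}$, hence $\alpha_1=\alpha_2=0$ and $k_1=k_2=d$, and then the term $(x+y)(xy-z^2)$ in the parametrisation is invariant only for $d=0$, which is not generically transitive.

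The remaining cases ($\Dfour$, $\Afour$, $\Athree+\Aone$) proceed exactly as you anticipate, and your remarks on stabiliser orders and on uniqueness via the normalisations of Theorem~\ref{thm:Ptwo} are correct and match the paper.
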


\begin{proof}
  Type $\Athree+2\Aone$: The surface $S$ is defined by
    \begin{equation*}
      x_0x_1-x_2^2=x_0^2-x_3x_4=0.
    \end{equation*}
    Note that this surface is toric. For each $d \in \ZZ$, the action of $G_d$
    is given by the representation
    \begin{equation*}
      (b,t) \mapsto
      \begin{pmatrix}
        1   & 0 & 0  & 0 & 0 \\
        b^2 & t^{2d} & 2t^{d}b & 0 & 0 \\
        b   & 0 & t^d  & 0 & 0 \\
        0   & 0 & 0  & t & 0 \\
        0   & 0 & 0  & 0 & t^{-1} \\	
      \end{pmatrix},
    \end{equation*}
    which is easily checked to be generically free and generically transitive.

    Type $\Dfour$: The surface $S$ can be defined by
    \begin{equation*}
      x_0x_3-x_1x_4=x_0x_1+x_1x_3 + x_2^2=0.
    \end{equation*}
   The associated rational map from $\Ptwo$ is given by
  \begin{equation*}
    (x:y:z)\mapsto (xz^2: z^3:yz^2:-z(xz+y^2):-x(xz+y^2)).
  \end{equation*}
  The associated action on $\Ptwo$ must therefore fix $\{z=0\}$ and $(1:0:0)$,
  hence has the form given by Lemma~\ref{lem:Ptwo}. By considering the term
  $z(xz+y^2)$, we see that we must have $k_1=2k_2$. Also by considering the
  action on the final term, we see that for the linear series to be invariant
  we must have $\alpha_1=\alpha_3=0$ (this is due to the appearance of the
  monomials $y^3$ and $xyz$ if $\alpha_1$ or $\alpha_3$ are non-zero).
  Therefore we also have $k_1=d$. Such an action may occur only when $d$ is
  even, in which case it is equivalent to $\tau_{d,-d/2}$. This is
  faithful if and only if $|d|=2$.  The action when $d=2$ may be given
  explicitly via the representation
   \begin{equation*}
      (b,t) \mapsto
      \begin{pmatrix}
        t^2   &  b   & 0 & 0    & 0 \\
        0     &  1   & 0 & 0    & 0 \\
        0     &  0   & t & 0    & 0 \\
        0     & -b   & 0 & t^2  & 0 \\
        -bt^2 & -b^2 & 0 & bt^2 & t^{4} \\	
      \end{pmatrix}.
   \end{equation*}

    Type $\Afour$: The surface $S$ can be defined by
    \begin{equation*}
      x_0x_1-x_2x_3=x_0x_4+x_1x_2 + x_3^2=0.
    \end{equation*}
    The associated rational map from $\Ptwo$ is given by
   \begin{equation*}
     (x:y:z)\mapsto (z^3:xyz: xz^2:yz^2:-y(x^2+yz)).
   \end{equation*}
   The associated action on $\Ptwo$ must therefore fix $\{z=0\},(1:0:0)$ and
   $(0:1:0)$. This implies in particular that $\alpha_1=0$.  By considering
   the final term, we see that we must have $2k_1=k_2$ and $\alpha_3=0$ (due
   to a term of the form $x^2z$ if $\alpha_3\neq0$). Such an action is
   therefore equivalent to $\tau_{d,2d}$ for some $d$. This is faithful if and
   only if $|d|=1$, in which case the stabiliser of a generic point has order
   $2$. Explicitly the action for $d=1$ is given by
   \begin{equation*}
      (b,t) \mapsto
      \begin{pmatrix}
        1    &  0     & 0 & 0    & 0 \\
        0     &  t^3 & 0 & bt^2    & 0 \\
        b     &  0     & t & 0    & 0 \\
        0     & 0       & 0 & t^2  & 0 \\
        0   & -2bt^3 & 0 & -b^2t^2 & t^{4} \\	
      \end{pmatrix}.
   \end{equation*}
   
   Type $\Athree+\Aone$: Note that we originally considered this
   surface in \cite[Section~5]{MR2753646}.  The equations are given by
    \begin{equation*}
      x_1x_3-x_2^2=x_0x_3+x_2x_4 + x_0^2=0.
    \end{equation*}
    The associated rational map from $\Ptwo$ is given by
   \begin{equation*}
     (x:y:z)\mapsto (xyz:y^3: y^2z:yz^2:-xz(x+z)).
   \end{equation*}
   The action on $\Ptwo$ must fix $\{y=0\},\{z=0\}$ and $(0:0:1)$. Hence we
   must have $\alpha_2=\alpha_3=0$.  The linear series is invariant if and
   only if $k=-d$, in which case this action is equivalent to
   $\tau_{d,d}$. This is faithful if and only if $|d|=1$ and the action in the
   case $d=-1$ is given in \cite[Section~5]{MR2753646}.

   Type $\Athree$: This is given by the equations
  \begin{equation*}
    x_0x_1-x_2^2=(x_0+x_1+x_3)x_3-x_2x_4=0.
  \end{equation*}
  It is described in \cite[Section~6.4]{thesis}.  
  The associated rational map from $\Ptwo$ is given by
  \begin{equation*}
    (x:y:z) \mapsto (z^3:x^2z:xz^2:xyz-z^3:(x+y)(xy-z^2)).
  \end{equation*}
  The action on $\Ptwo$ must therefore fix $(1:0:0)$, $(0:1:0)$, $(1:-1:0)$
  and the lines $\{x=0\}$, $\{z=0\}$. Using Lemma~\ref{lem:Ptwo}, we must have
  $\alpha_1=\alpha_2=0$ and moreover $k_1=k_2=d$, as there are three fixed
  points. Considering the term $(x+y)(xy-z^2)$, we deduce that the linear
  series is invariant only if $d=0$, which does not give a generically
  transitive action.
\end{proof}

Finally, we consider the cubic surfaces. 

\begin{lemma}\label{lem:cubic}
  For cubic generalised del Pezzo surfaces we have the following.
  \begin{itemize}
  \item The surface of type $\Esix$ admits a unique structure as an
    equivariant compactification of a homogeneous space for $G_2$ (but
    none for $G_0$ or $G_1$).
  \item The surface of type $\Afive+\Aone$ admits a unique structure
    as an equivariant compactification of a homogeneous space for
    $G_1$ (but none for $G_0$).
   \end{itemize}
   Moreover given any generically transitive action of $G_d$ on these
   surfaces, any fixed point that lies on a $(-1)$-curve must also lie on a
   $(-2)$-curve.
\end{lemma}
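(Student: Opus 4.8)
The plan is to handle the two surfaces separately but with the same underlying mechanism developed in Section~\ref{Section:dp}: realize each cubic as the image of an explicit anticanonical rational map $\rho^{-1}:\Ptwo\dashrightarrow S$, identify which of the actions classified in Lemma~\ref{lem:Ptwo} make this map equivariant (equivalently, which preserve the associated cubic linear series $V\subset H^0(\Ptwo,\mathcal O(3))$ by Lemma~\ref{lemma:equivariant}), and then read off the possible $G_d$ and the stabilizer orders from the resulting triangular form. First I would write down, for the $\Esix$ cubic, its equation together with the anticanonical map from $\Ptwo$; because the unique $(-1)$-curve and the $\Esix$ singularity impose which point and which line of $\Ptwo$ must be invariant, the relevant action is forced into the shape of Lemma~\ref{lem:Ptwo}. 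Imposing invariance of $V$ then pins down the ratios among $k_1,k_2$ (and forces all but one $\alpha_i$ to vanish, since a monomial like a cube or a cross term would otherwise appear), which — combined with the faithfulness constraint $\gcd(|k|,|d|)=1$ — leaves exactly $G_2$ and no faithful action for $G_0$ or $G_1$; the stabilizer order of a general point is then $|k|\ge 2$, so no structure as an EC of $G_2$ itself. I would do the analogous computation for the $\Afive+\Aone$ cubic, where the forced invariant data leave only $G_1$ (stabilizer order $2$) and rule out $G_0$.

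For the final assertion — that any fixed point lying on a $(-1)$-curve also lies on a $(-2)$-curve — I would argue on $\Ptwo$ and pull back. In the triangular normal form of Lemma~\ref{lem:Ptwo} coming from the cases surviving above, the fixed points of the action on $\Ptwo$ are exactly the finitely many points $(1:0:0)$, $(0:1:0)$ (and on the relevant boundary line) that we already track; I would locate the strict transform of the unique $(-1)$-curve under $\rho$ and check directly that the point at which it meets a $(-1)$-curve is precisely a point that $\rho$ blows down from (or to) a $(-2)$-curve. Concretely: since the boundary of the action on $\tS$ consists of $r+2$ curves all of which are negative curves, and the $(-1)$-curves form a connected chain meeting the $(-2)$-configuration, any $G_d$-fixed point on a $(-1)$-curve $E$ is an intersection point $E\cap E'$ with another boundary curve $E'$; a short case check on the dual graphs of $\Esix$ and $\Afive+\Aone$ shows every $(-1)$-curve meets a $(-2)$-curve at each of its (at most two) boundary-fixed points, giving the claim.

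The main obstacle I expect is the bookkeeping in the invariance-of-$V$ step: one must expand $\rho^{-1}(b,t)\cdot(x:y:z)$ in the candidate triangular form and verify which cubic monomials are produced, and it is easy to miss a stray term (as the parenthetical remarks in Lemma~\ref{lem:quartic} about $y^3$ and $xyz$ illustrate). The degree-$3$ case is tighter than degree $4$ or $5$ because the linear series is smaller ($4$-dimensional rather than $5$-dimensional), so fewer monomials are available and the constraints on $(k_1,k_2,\alpha_i)$ are correspondingly more rigid — which is exactly why $G_2$ (resp.\ $G_1$) and nothing else survives. I would lean on the uniqueness of these cubics (from \cite{MR80f:14021}) to avoid having to worry about moduli, and on Proposition~\ref{prop:desing} to transfer everything freely between $S$ and $\tS$.
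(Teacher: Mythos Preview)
Your strategy for the classification part is exactly the paper's: write each cubic as the anticanonical image of $\Ptwo$, force the induced $\Ptwo$-action into the triangular form of Lemma~\ref{lem:Ptwo} via the invariant point and line, and then impose invariance of the four-dimensional linear series $V\subset H^0(\Ptwo,\mathcal O(3))$ to pin down $(k_1,k_2)$ and kill two of the $\alpha_i$. The paper carries this out verbatim and arrives at the unique faithful action in each case.

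The gap is in your argument for the fixed-point assertion. Your ``concrete'' dual-graph claim --- that any $G_d$-fixed point on a $(-1)$-curve $E$ must be an intersection $E\cap E'$ with another boundary component --- is not automatic and is in fact the entire content of what needs to be proved. A $G_d$-action restricted to $E\cong\Pone$ could a priori have two fixed points (if only the torus part acts nontrivially), and for the $\Esix$ surface the unique $(-1)$-curve meets only \emph{one} $(-2)$-curve in the dual graph; a second fixed point, if it existed, would lie on no $(-2)$-curve. What actually makes the claim true is that the $\Ga$-part of $G_d$ acts nontrivially on the line $\{x_0=x_1=0\}\subset S$, forcing a unique fixed point (the singularity). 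The paper establishes this by writing down the explicit $4\times 4$ matrix of the action on $S$ and restricting it to each line --- a one-line check once the matrix is in hand, and not something the dual graph alone can supply. Your alternative of pulling back fixed points from $\Ptwo$ is also awkward here, since the $(-1)$-curves on $\tS$ arise as exceptional divisors over infinitely near points rather than as strict transforms of curves in $\Ptwo$. Just compute on $S$ directly, as the paper does.
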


\begin{proof}
    Type $\Esix$: This is defined by $x_3x_0^2-x_0x_2^2+x_1^3=0$. It is the
    closure of the image of $\Ptwo$ under the rational map
    \begin{equation*}
      (x:y:z) \mapsto (z^3: yz^2 : xz^2 : x^2z-y^3),
    \end{equation*}
    with $(1:0:0)$ and $\{z=0\}$ in $\Ptwo$ fixed.  The only questionable part
    of the linear series is $x^2z-y^3$, which maps to an element of the linear
    series under the matrix in Lemma~\ref{lem:Ptwo} if and only if $2k_1=3k_2$
    and $\alpha_1=\alpha_3=0$. So there is an integer $k$ with
    $(k_1,k_2)=(3k,2k)$.  This gives an action that is equivalent to
    $\tau_{2k,3k}$, which is faithful if and only if $|k|=1$.  In this case,
    the stabiliser of a general point has order $3$, so the action is not
    generically free. The induced action on the surface in the case $k=1$ is
    given by
    \begin{equation*}
      (b,t) \mapsto
      \begin{pmatrix}
        1    &  0     & 0 & 0     \\
        0     &  t^2 & 0 & 0     \\
        b     &  0     & t^3 & 0    \\
        b^2     & 2bt^3       & 0 & t^6 \\
      \end{pmatrix}.
   \end{equation*}
    On the only line $\{x_0=x_1=0\}$, the only fixed point is the singularity
    $(0:0:0:1)$.

    Type $\Afive+\Aone$: This surface has equations
    $x_1^3+x_2x_3^2+x_0x_1x_2=0$. The determination of all actions 
    of any $G_d$ on this surface was given in \cite[Lemma~4]{arXiv:1205.0373},
    but we reprove this result for completeness. The associated rational map is
    \begin{equation*}
      (x:y:z) \mapsto (-z^3-x^2y: yz^2: y^2z: xyz).
    \end{equation*}
    An associated action of $G_d$ on $\Ptwo$ must fix the points $(1:0:0),
    (0:1:0)$ and the lines $\{y=0\}, \{z=0\}$. In the form of
    Lemma~\ref{lem:Ptwo}, we must have $\alpha_1=\alpha_3=0$ and $k_1=d$.  The
    associated linear series is invariant if and only if $2k_1=k_2$, in which
    case we obtain and action on $\Ptwo$ that is equivalent to
    $\tau_{d,-2d}$.  This action is faithful if and only if $|d|=1$, in which
    case the stabiliser of a general point has order $2$. The induced action
    on $S$ in the case $d=1$ is given by
    \begin{equation*}
      (b,t) \mapsto
      \begin{pmatrix}
        t^{4}   &  -b^2t^{2}     & 0  & -2bt^3     \\
        0     &  t^2 & 0 & 0     \\
        0     &  0     & 1 & 0    \\
       0     &  bt^2       & 0 & t^3 \\
      \end{pmatrix}.
   \end{equation*}
    On the only lines $\{x_1=x_2=0\}$ and $\{x_1=x_3=0\}$, the fixed points are the
    singularities $(0:0:1:0)$ and $(1:0:0:0)$.
\end{proof}

\begin{proof}[Proof of Theorem~\ref{thm:dp}]
  By Lemma~\ref{lem:quartic}, the quartic generalised del Pezzo
  surfaces of types $\Athree +2\Aone,\Dfour$ and $\Athree +\Aone$ are
  equivariant compactifications of $G_d$ for some $d$. Therefore, all
  surfaces below them in Figure~\ref{fig:blow-ups} also are by
  Lemma~\ref{lem:blow_down}. This is exactly the first collection of
  surfaces given in the statement of Theorem~\ref{thm:dp}.

  Next, by Lemma~\ref{lem:cubic} we see that the cubic surfaces of
  types $\Esix$ and $\Afive +\Aone$ are equivariant compactifications
  of homogeneous spaces for $G_d$ for some $d$.  Again by
  Lemma~\ref{lem:blow_down}, we deduce that all surfaces below them in
  Figure~\ref{fig:blow-ups} are also equivariant compactifications of
  homogeneous spaces for $G_d$ for some $d$. Also, by
  Lemma~\ref{lem:quintic} and Lemma~\ref{lem:quartic}, we know that
  the quintic generalised del Pezzo surface of type $\Afour$ and the
  quartic generalised del Pezzo surface of type $\Afour$ are not
  equivariant compactifications of $G_d$ for any $d$.  In particular,
  this implies the same result for every surface lying above them in
  Figure~\ref{fig:blow-ups} by Lemma~\ref{lem:blow_down}.

  To complete the proof of Theorem~\ref{thm:dp}, it suffices to show
  that the remaining surfaces in Figure~\ref{fig:blow-ups} are not
  equivariant compactifications of homogeneous spaces for $G_d$ for
  any $d$. For the quartic surface of type $\Athree$ with four lines,
  this follows from Lemma~\ref{lem:quartic}. The cubic del Pezzo
  surfaces of types $\Dfive$ and $\Afive$ have one-dimensional
  automorphism groups by \cite[Table~3]{MR2584614}, so they cannot
  have a generically transitive action of any $G_d$.  Surfaces of type
  $\Eseven$ and $\Aseven$ of degree $2$ are blow-ups of the cubic
  surfaces of type $\Esix$ and $\Afive+\Aone$ in a point on one of the
  $(-1)$-curves outside the $(-2)$-curves. However by
  Lemma~\ref{lem:cubic}, there are no generically transitive actions
  fixing such points and hence these surfaces of degree $2$ cannot
  have such an action.  Finally, surfaces of type $\Dsix+\Aone$ and
  $\Dsix$ in degree $2$ and type $\Eeight$ in degree $1$ are blow-ups
  of surfaces that have no generically transitive action of $G_d$, so
  they also cannot have such an action. This completes the proof of
  Theorem~\ref{thm:dp}.
\end{proof}

\bibliographystyle{alpha}

\end{document}